\documentclass{amsart}
\usepackage{amssymb}
\usepackage{enumerate}
\usepackage{url}
\usepackage{xypic}
\usepackage{graphicx}

\newcommand\N{\mathbb N}

\newcommand\R{\mathbb R}

\newcommand\A{\mathbb A}

\newcommand\ph\varphi
\newcommand\ps\psi
\newcommand\ep\varepsilon
\newcommand\rh\varrho
\newcommand\al\alpha
\newcommand\be\beta
\newcommand\ga\gamma
\newcommand\om\omega
\newcommand\ta\tau
\renewcommand\th\theta
\newcommand\de\delta
\newcommand\ze\zeta
\newcommand\ch\chi
\newcommand\et\eta
\newcommand\io\iota
\newcommand\la\lambda
\newcommand\si\sigma

\newcommand\dd\ddagger
\newcommand\Ga\Gamma
\newcommand\De\Delta
\newcommand\Th\Theta
\newcommand\La\Lambda
\newcommand\Si\Sigma
\newcommand\Ph\Phi
\newcommand\Ps\Psi
\newcommand\Om\Omega
\DeclareMathOperator{\PO}{PO}
\DeclareMathOperator{\QM}{QM}
\DeclareMathOperator{\Pos}{Pos}\DeclareMathOperator{\supp}{supp}
\DeclareMathOperator{\ord}{ord}
\newcommand\vv{\vee\vee}
\newcommand\V{\mathcal{V}}
\newcommand\Se{\mathcal{S}}

\newtheorem{theorem}{Theorem}[section]

\newtheorem{proposition}[theorem]{Proposition}
\newtheorem{corollary}[theorem]{Corollary}
\theoremstyle{definition}
\newtheorem{definition}[theorem]{Definition}

\newtheorem{example}[theorem]{Example}
\theoremstyle{remark}
\newtheorem{remark}[theorem]{Remark}

\newcommand\M{\mathfrak{m}}

\begin{document}
\title[Positive Polynomials and Sequential Closures of Quadratic Modules]
{Positive Polynomials and Sequential Closures of Quadratic
Modules}

\author{Tim Netzer}
\address{Universit\"at Konstanz, Fachbereich Mathematik und Statistik, 78457 Konstanz, Germany}
\email{tim.netzer@uni-konstanz.de}

\keywords{Moment Problem; Semialgebraic Sets; Real Algebra;
Positive Polynomials and Sum of Squares;} \subjclass{44A60, 14P10,
13J30; 11E25}
\date{\today}

\begin{abstract} Let $\Se=\{x\in\R^n\mid f_1(x)\geq 0,\ldots,f_s(x)\geq 0\}$ be a basic closed semi-algebraic set in
$\R^n$ and $\PO(f_1,\ldots,f_s)$ the corresponding preordering in
$\R[X_1,\ldots,X_n]$. We examine for which polynomials $f$
 there exist identities $$f+\ep
q\in \PO(f_1,\ldots,f_s) \mbox{ for all } \ep>0.$$ These are
precisely the elements of the sequential closure of
$\PO(f_1,\ldots,f_s)$ with respect to the finest locally convex
topology. We solve the open problem from \cite{km,kms}, whether
this equals the double dual cone
$$\PO(f_1,\ldots,f_s)^{\vv},$$ by providing a counterexample. We
then prove a theorem that allows to obtain identities for
polynomials as above, by looking at a family of
\textit{fibre-preorderings}, constructed from bounded polynomials.
These fibre-preorderings are easier to deal with than the original
preordering in general. For a large class of examples we are thus
able to show that either \textit{every} polynomial $f$ that is
nonnegative on $\Se$ admits such representations, or at least the
polynomials from $\PO(f_1,\ldots,f_s)^{\vv}$ do. The results also
hold in the more general setup of arbitrary commutative algebras
and quadratic modules instead of preorderings.
\end{abstract}

\maketitle
\section{Introduction} Finitely many real polynomials
$f_1,\ldots,f_s\in\R[\underline{X}]=\R[X_1,\ldots,X_n]$ define a
basic closed semi-algebraic set
$$\Se=\Se(f_1,\ldots,f_s)=\left\{x\in\R^n\mid f_1(x)\geq
0,\ldots,f_s(x)\geq 0\right\}.$$ One is interested in finding
characterizations of $\Pos(\Se)$, the set of all polynomials that
are nonnegative on $\Se$. Obvious candidates for such nonnegative
polynomials are the elements of
$$\PO(f_1,\ldots,f_s):=\left\{\sum_{e\in\{0,1\}^s}\si_ef_1^{e_1}\cdots
f_s^{e_s}\mid \si_e\in\sum\R[\underline{X}]^2\right\},$$ the so
called \textit{preordering} generated by $f_1,\ldots,f_s$. Many
works in Real Algebra and Real Algebraic Geometry deal with the
question how $\PO(f_1,\ldots,f_s)$ relates to $\Pos(\Se)$. In
dimension one, equality occurs often, at least if the $f_i$ are
the so called \textit{natural generators} for $\Se$ (see
\cite{km,kms}). In dimension two, equality is a much more uncommon
phenomenon. For example, not every globally nonnegative polynomial
in two variables is a sum of squares of polynomials. However,
Scheiderer has given two local global principles that yield
equality between $\PO(f_1,\ldots,f_s)$ and $\Pos(\Se)$ under
certain conditions, in the case that $\Se$ is compact and
two-dimensional (see \cite{s1,s2} and also \cite{m,ckm}). The
results require a good behavior of the curves bounding $\Se$.
Noncompact two-dimensional examples where equality holds are even
more rare. One of the few known examples is the preordering
generated by $1-X^2$ in $\R[X,Y]$, see \cite{m2}. From dimension
three upwards, equality between $\PO(f_1,\ldots,f_s)$ and
$\Pos(\Se)$ is never possible. This is one of the main results
from \cite{s3}.

Of course one can ask questions beside equality. For example,
Schm\"udgen's famous result from \cite{sch1} says that in case
$\Se$ is compact, $\PO(f_1,\ldots,f_s)$ contains every  polynomial
which is \textit{strictly} positive on $\Se$, no matter what
generators $f_1,\ldots,f_s$ of $\Se$ are chosen, and independent
of the dimension of $\Se$. However, this result fails in general
if $\Se$ is not compact. If $\Se$ is very big,  for example if it
contains a full dimensional cone, then the preordering is indeed
far from containing every nonnegative or positive polynomial (see
for example \cite{km,kms,n2,ps,s}).

Another question arising in this context concerns the
$\Se$-\textit{Moment Problem}. One wants to find a
characterization of the linear functionals
$L\colon\R[\underline{X}]\rightarrow\R$ that are integration on
$\Se$. Haviland's Theorem (\cite{h}, see also \cite{m}) provides a
necessary and sufficient condition. Namely,  $L$ is integration on
$\Se$ if and only if $L$ is $\geq 0$ on $\Pos(\Se)$. As a
characterization of $\Pos(\Se)$ is not very simple either, one
wants to weaken the condition on $L$ and still apply Haviland's
Theorem. More precisely, one wants to know whether $L\geq 0$ on
$\PO(f_1,\ldots,f_s)$ is sufficient for $L$ to be integration. In
view of Haviland's Theorem, that means to ask whether
$$\PO(f_1,\ldots,f_s)^{\vv}=\Pos(\Se)$$ holds, where
$\PO(f_1,\ldots,f_s)^{\vv}$ denotes the \textit{double dual cone}
of the preordering. One says that the preordering has the
\textit{Strong Moment Property} (SMP) in this case. For example,
if $\Se$ is compact, then by the above mentioned Schm\"udgen
Theorem, the preordering always  has (SMP). There are also many
noncompact examples. \cite{sch2} gives a criterion to decide this
question, involving \textit{fibre-preorderings} constructed from
bounded polynomials.

Now in \cite{km,kms}, the following preordering is considered:
$$\PO(f_1,\ldots,f_s)^{\dd}:=\left\{f\in\R[\underline{X}]\mid
\exists q\in\R[\underline{X}]\ \forall \ep>0\ f+\ep
q\in\PO(f_1,\ldots,f_s)\right\}.$$ This definition does not use
linear functionals and is therefore better accessible to algebraic
methods. $\PO(f_1,\ldots,f_s)^{\dd}$ turns out to be the
\textit{sequential closure} of $\PO(f_1,\ldots,f_s)$ with respect
to the finest locally convex topology on $\R[\underline{X}]$,
whereas $\PO(f_1,\ldots,f_s)^{\vv}$ is the \textit{closure}. The
following relations hold:
$$\PO(f_1,\ldots,f_s)\subseteq \PO(f_1,\ldots,f_s)^{\dd}\subseteq
\PO(f_1,\ldots,f_s)^{\vv}\subseteq\Pos(\Se).$$ In \cite{sw},
Theorem 5.1,  it is shown that every element from $\Pos(\Se)$
belongs to the sequential closure of the preordering in a certain
\textit{localization} of the polynomial ring. A slightly weaker
version of that is \cite{m}, Theorem 6.2.3.

The authors of \cite{km,kms,p} prove
$\PO(f_1,\ldots,f_s)^{\dd}=\Pos(\Se)$ (and therefore (SMP)) for
certain classes of preorderings in the polynomial ring.
 The question
whether in polynomial rings
$$\PO(f_1,\ldots,f_s)^{\dd}=\PO(f_1,\ldots,f_s)^{\vv}$$ always
holds remained open in these works (see Open Problem 3 in
\cite{km} and Open Problem 2 in \cite{kms}). We solve this problem
by providing a counterexample; $\PO(f_1,\ldots,f_s)^{\dd}$ does
not equal $\PO(f_1,\ldots,f_s)^{\vv}$ in general.

Then it is natural to ask for conditions under which equality
holds. It is also interesting to ask whether a fibre theorem in
the spirit of \cite{sch2} can be established for
$\PO(f_1,\ldots,f_s)^{\dd}$ instead of
$\PO(f_1,\ldots,f_s)^{\vv}$. This question already turned up in
\cite{sch2}. Our counterexample implies that such a general
theorem will require stricter assumptions  than Schm\"udgen's
Fibre Theorem.

Theorem 5.3 from \cite{kms} is such a fibre theorem for finitely
generated preorderings that describe cylinders with compact cross
section. We will generalize this result to quadratic modules in
arbitrary commutative algebras.

We will then deduce criterions for
$$\PO(f_1,\ldots,f_s)^{\dd}=\PO(f_1,\ldots,f_s)^{\vv}$$ and
$$\PO(f_1,\ldots,f_s)^{\dd}=\Pos(\Se)$$ to hold.
They allow applications that go beyond the known examples from
\cite{km,kms,p}.

\bigskip\noindent

 {\bf \noindent Acknowledgements} The author thanks Murray
 Marshall and Claus Scheiderer for interesting and helpful
 discussions on the topic of this work.

\section{Notations and Preliminaries}
For the following results on topological vector spaces we refer to
\cite{b}, Chapter II. Let $E$ be an $\R$-vector space. The
\textit{finest locally convex topology} on $E$ is the vector space
topology having the collection of all convex, absorbing and
symmetric subsets of $E$ as a neighborhood base of zero. All the
linear functionals on $E$ are continuous, $E$ is Hausdorff, and
every finite dimensional subspace of $E$ inherits the canonical
topology. By \cite{sf}, Chapter 2, Exercise 7(b), a sequence in
$E$ converges if and only if it lies in a finite dimensional
subspace of $E$ and converges there. So for the \textit{sequential
closure} of a set $M$ in $E$, defined as the set of all limits of
sequences from $M$, and denoted by $M^{\dd}$, we have
$$M^{\dd}=\bigcup_{W} \overline{M\cap W},$$ where the union runs
over all finite dimensional subspaces $W$ of $E$.  From now on, we
will restrict ourself to \textit{convex cones} in $E$, i.e. to
subsets $M$ that are closed under addition and multiplication with
positive reals. The following alternative characterization for
$M^{\dd}$ can be found in \cite{cmn}: $$M^{\dd}=\left\{f\in E\mid
\exists q\in E\ \forall \ep>0\ f+\ep q\in M\right\}.$$ For
preorderings $M$ in $\R$-algebras, the object $M^{\dd}$ was first
introduced in \cite{km} in terms of this last characterization. It
only turned out later that it is the sequential closure of $M$.

The \textit{closure} of a convex cone $M$ is denoted by
$\overline{M}.$ \cite{b}, Chapter II.39, Corollary 5 implies
$\overline{M}=M^{\vv}$ for convex cones. Here, $M^{\vv}$ denotes
the double dual cone of $M$, i.e. the set
$$\left\{x\in E\mid L(x)\geq 0 \mbox{ for all } L\colon
E\rightarrow \R \mbox{ linear with } L(M)\subseteq\R_{\geq
0}\right\}.$$ We obviously have $M\subseteq M^{\dd}\subseteq
M^{\vv}.$ If $E$ has countable vector space dimension, then a set
$M$ in $E$ is closed if and only if its intersection with every
finite dimensional subspace of $E$ is closed, by \cite{bi},
Proposition 1. So $M$ is closed if and only if it is sequentially
closed, i.e. if $M=M^{\dd}$ holds. This implies that the
(transfinite) sequence of \textit{iterated sequential closures} of
$M$ terminates at $\overline{M}$ in that case. Note that in case
$E$ is not countable dimensional, then this sequence may terminate
before it reaches the closure. It indeed always terminates at the
closure with respect to the \textit{topology of finitely open
sets}, which can be smaller in the case of uncountable dimension.
We refer to \cite{cmn1} for more information.

Now let $A$ be a commutative $\R$-algebra with $1$. Of course $A$
is an $\R$-vector space, and we equip it with the finest locally
convex topology. A \textit{quadratic module} is a set $M\subseteq
A$ with $M+M\subseteq M, 1\in M$ and $A^2\cdot M\subseteq M.$
Here, $A^2$ denotes the set of squares in $A$. A
\textit{preordering} is a quadratic module with the additional
property $M\cdot M\subseteq M.$ Quadratic modules (and
preorderings) are convex cones, and we look at $M^{\dd}$ and
$M^{\vv},$ defined as above.

For a set $S\subseteq A$, the smallest quadratic
module/preordering containing $S$ is called the quadratic
module/preordering \textit{generated} by $S$. We write $\QM(S)$
and $\PO(S)$ for it, respectively. $\QM(S)$ consists of all finite
sums of elements $\si$ and $\si\cdot f,$ where $f\in S$ and $\si$
is a sum of squares in $A$. $\PO(S)$ consists of all finite sums
of elements $\si f_1\cdots f_t$, where $\si$ is a sum of squares,
$t\geq 0$ and all $f_i\in S$. Of special interest is the case that
$S$ is finite. We call $\QM(S)$ and $\PO(S)$ \textit{finitely
generated} then.

An important notion, introduced in \cite{ps}, is that of
\textit{stability} of a finitely generated quadratic module. If
$M$ is generated by $S=\{a_1,\ldots,a_s\}$, then we call $M$
\textit{stable}, if for every finite dimensional $\R$-subspace $U$
of $A$ there is another finite dimensional $\R$-subspace $V$ of
$A$, such that $$M\cap U\subseteq
\left\{\si_0+\si_1a_1+\cdots+\si_ta_s\mid \si_i\in\sum
V^2\right\}.$$ A map that assigns to each $U$ such a $V$ is then
called a \textit{stability map}. Whereas the notion of stability
is independent of the choice of generators of $M$, the stability
map is not. We refer the reader to \cite{n2,ps,s} for proofs and
details.

To $A$ there corresponds the set of all real characters on $A$,
i.e.
$$\V_A:=\left\{\al\colon A\rightarrow\R\mid \al \mbox{ unital } \R\mbox{-algebra
homomorphism}\right\}.$$ Elements $a$ from $A$ define functions
$\hat{a}$ on $\V_A$ by $\hat{a}(\al):=\al(a).$ We equip $\V_A$
with the coarsest topology making all these functions continuous.
As the functions $\hat{a}$  separate points, this makes $\V_A$ a
Hausdorff space, and we have the algebra homomorphism
$$\hat{} \colon A\rightarrow C(\V_A,\R).$$ If $A$ is finitely
generated as an $\R$-algebra, then $\V_A$ embeds into some $\R^n$,
 by taking generators $x_1,\ldots,x_n$ and sending $\al$ to
$(\al(x_1),\ldots,\al(x_n))$. So $\V_A$ is  the zero set in $\R^n$
of the kernel of the algebra homomorphism
$\pi\colon\R[X_1,\ldots,X_n]\rightarrow A; X_i\mapsto x_i$. The
use of an element $a$ from $A$ as a function $\hat{a}$ coincides
with the usual use of $a$ as a polynomial function on that
embedded variety. In particular, the topology on $\V_A$ is
inherited from the canonical one on $\R^n$. Note also that $A$ is
a countable dimensional $\R$-vector space in case it is finitely
generated as an $\R$-algebra.

Now we consider arbitrary commutative $\R$-algebras $A$ with $1$
again. For a set $M\subseteq A$, without loss of generality a
quadratic module, we define $$\Se(M)=\left\{\al\in \V_A\mid
\al(M)\subseteq\R_{\geq 0}\right\}.$$ If $M$ is finitely
generated, then $\Se(M)$ is called a \textit{basic closed
semi-algebraic set}. For any set $Y \subseteq\V_A$ we define
$$\Pos(Y):=\left\{a\in A\mid \hat{a}\geq 0 \mbox{ on }
Y\right\}.$$ This is a preordering. Starting with a quadratic
module or a preordering $M$ in $A$, we have the following chain:

$$M\subseteq M^{\dd}\subseteq M^{\vv}\subseteq\Pos(\Se(M)).$$ The
last inclusion comes from the fact that characters on $A$ are in
particular linear functionals. As for example proven in
\cite{cmn1}, $M^{\dd}$ and $M^{\vv}$ are again quadratic modules,
even preorderings if $M$ was a preordering. Following
\cite{km,kms,sch2}, we make the following definitions.

\begin{definition}\label{smpdd}\begin{itemize}\item[(i)] $M$ has the \textit{strong moment property}
(SMP), if $M^{\vv}=\Pos(\Se(M))$ holds \item[(ii)] $M$ has the
\textit{$\dd$-property}, if  $M^{\dd}=\Pos(\Se(M))$ holds
\item[(iii)] $M$ is \textit{saturated}, if $M=\Pos(\Se(M))$ holds
\item[(iv)] $M$ is \textit{closed}, if $M=M^{\vv}$ holds
\end{itemize}
\end{definition}

The interest in $M^{\vee\vee}$ and (SMP) is motivated by a
classical theorem by Haviland. For certain classes of algebras, it
states that a linear functional on $A$ is integration with respect
to some measure on $\Se(M)$, if and only if it is nonnegative on
$\Pos(\Se(M))$ (\cite{h} in the case that $A$ is a polynomial
algebra, see \cite{m} for a more general version). So if $M$ has
(SMP), then every functional that is nonnegative on $M$ is
integration on $\Se(M)$. Nonnegativity on $M$ is a priori a much
weaker condition than nonnegativity on $\Pos(\Se(M)).$ This is one
of the reasons that make (SMP) so interesting.

A method to decide whether (SMP) holds for a finitely generated
preordering $P$ in $\R[\underline{X}]$ is given in \cite{sch2}, as
mentioned in the introduction. Let $b$ be a polynomial that is
bounded on $\Se(P)$. Then $$P^{\vv}=\bigcap_{r\in\R}
\left(P+(b-r)\right)^{\vv}$$ holds, where $(b-r)$ denotes the
ideal generated by $b-r$. This implies that $P$ has (SMP) if and
only if all the preorderings $P+(b-r)$ have (SMP). As these so
called \textit{fibre preorderings} usually describe lower
dimensional semi-algebraic sets, they are easier to deal with in
general.
 The result in particular implies that every finitely generated
preordering in $\R[\underline{X}]$ describing a compact set
$\Se(P)$ has (SMP). This was already part of the earlier paper
\cite{sch1}. For an alternative proof of the result from
\cite{sch2} see also \cite{m,n}.

The $\dd$-property was introduced and studied in \cite{km,kms} for
polynomial algebras. The authors for example show that in case
$\Se(P)$ is a cylinder with compact cross section, then the
preordering $P$ has the $\dd$-property, under reasonable
assumptions on the generators of $P$. This is also shown, using
different methods, in \cite{p}.

It was an open problem in \cite{km,kms}, whether (SMP) and the
$\dd$-property are equivalent or even $M^{\dd}=M^{\vv}$ is always
true. We start by showing that the answer to both questions is
negative.

\section{A counterexample}
The example in this section will answer Open Problem 3 in
\cite{km} and Open Problem 2 in \cite{kms} to the negative. It
will also give a negative answer to the question in \cite{sch2},
whether the fibre theorem holds for the $\dd$-property instead of
(SMP).

Consider $A=\R[X,Y]$, the polynomial algebra in two variables, and
$f_1=Y^3,\ f_2=Y+X,\ f_3=1-XY\ \mbox{ and } f_4=1-X^2\in A.$ These
polynomials define the following basic closed semi-algebraic set
$\Se(f_1,\ldots,f_4)$ in $\V_A=\R^2$:

\begin{center}\bf\includegraphics[scale=0.3]{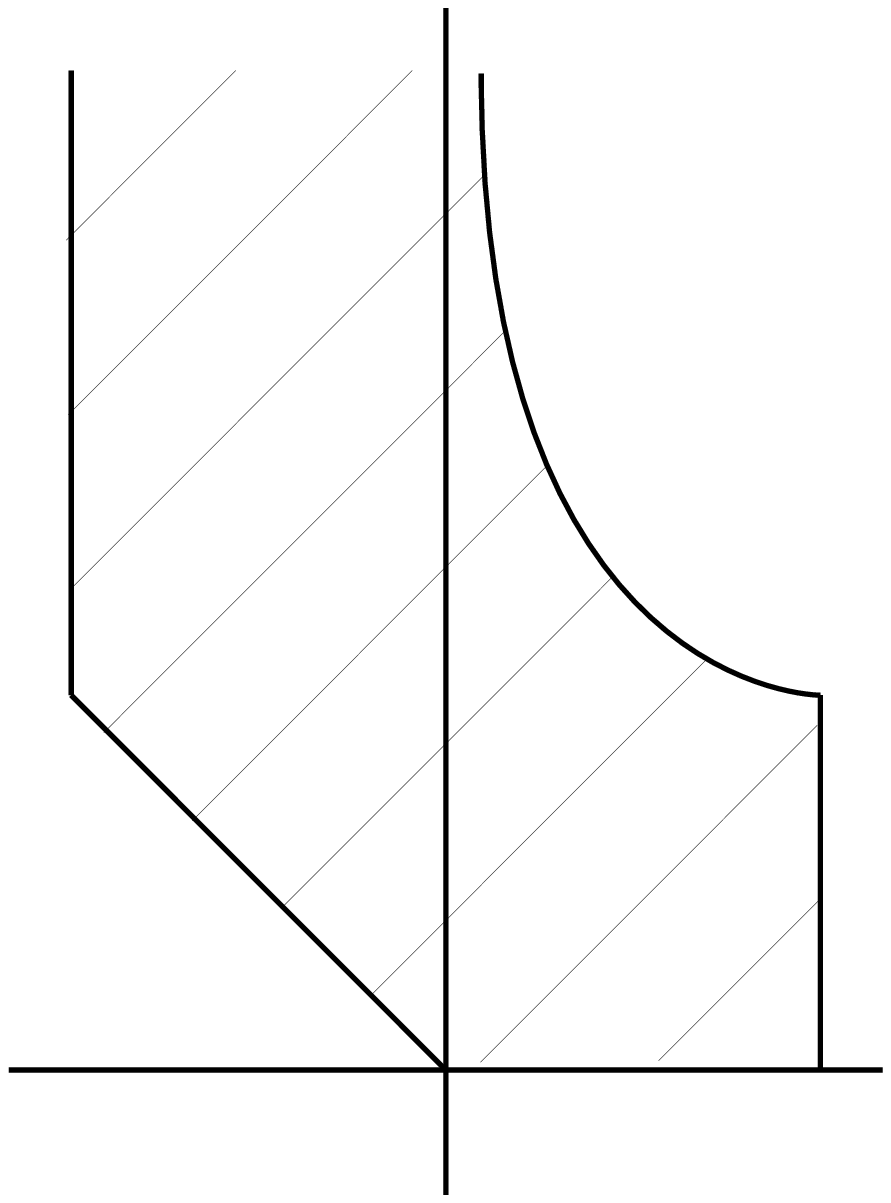}\end{center}

\begin{proposition}\label{smp} The preordering $P=\PO(f_1,f_2,f_3,f_4)$ in
$\R[X,Y]$ has (SMP)
\end{proposition}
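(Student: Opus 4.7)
The plan is to apply Scheiderer's fibre theorem (quoted in the introduction) to the bounded polynomial $b=X$, which satisfies $|X|\leq 1$ on $\Se(P)$ because $f_4=1-X^2\in P$. By that theorem, $P^{\vv}=\bigcap_{r\in\R}(P+(X-r))^{\vv}$, and since $\Se(P)=\bigcup_r\Se(P+(X-r))$, one has $\bigcap_r\Pos(\Se(P+(X-r)))=\Pos(\Se(P))$. Thus it suffices to show that each fibre preordering $P+(X-r)$ has (SMP). Moreover, since any linear functional nonnegative on $P+(X-r)$ vanishes on the ideal $(X-r)$, this reduces to checking (SMP) for the image preordering $\overline P_r$ in $\R[X,Y]/(X-r)\cong\R[Y]$, which is generated by the images $Y^3$, $Y+r$, $1-rY$, $1-r^2$.

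Next, I would do a case analysis on $r\in\R$. If $|r|>1$ then $1-r^2$ is a strictly negative real number in $\overline P_r$, so $\overline P_r$ is the whole ring $\R[Y]$ and (SMP) is vacuous. If $0<r\leq 1$, then $\Se(\overline P_r)=[0,1/r]$ (respectively $[0,1]$ for $r=1$) is compact, so (SMP) follows at once from Schm\"udgen's theorem. If $r\leq 0$, then $\Se(\overline P_r)=[|r|,\infty)$ (here $Y^3\geq 0$ gives $Y\geq 0$, $Y+r\geq 0$ gives $Y\geq|r|$, and $1-rY\geq 0$ is automatic when $r\leq 0$ and $Y\geq 0$). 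Crucially, $\overline P_r$ contains the generator $Y+r=Y-|r|$ of the half-line $[|r|,\infty)$, and the classical one-variable result (a shift of the fact that $\PO(Y)$ is saturated for $[0,\infty)$) gives $\PO(Y-|r|)=\Pos([|r|,\infty))$ in $\R[Y]$. Hence $\overline P_r$ is sandwiched between two equal sets and is itself saturated, so (SMP) holds trivially.

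Combining the cases via the fibre theorem yields $P^{\vv}=\Pos(\Se(P))$, which is (SMP). The main obstacle is not a hard computation but the case analysis itself: one must verify in each regime that the image preordering really describes the expected one-dimensional piece and that the generator needed for saturation (either compactness, or the natural half-line generator $Y+r$) is present. The cases $r=\pm 1$ need a moment of care because $1-r^2=0$ contributes nothing, but the other generators alone are sufficient; for $r\leq 0$ the fibre is unbounded, which is exactly why the argument via the saturatedness of $\PO(Y-|r|)$ (rather than via Schm\"udgen) is essential.
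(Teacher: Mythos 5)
Your proof is correct and takes essentially the same route as the paper: Schm\"udgen's fibre theorem applied to the bounded polynomial $X$, Schm\"udgen's theorem on the compact (or empty) fibres for $r\notin[-1,0]$, and saturation of the one-variable fibre preorderings for $r\in[-1,0]$, where the paper cites a result of Kuhlmann--Marshall while you argue directly via the classical saturation of $\PO(Y+r)$ for the half-line $[-r,\infty)$ plus a sandwich --- the same content. (Just read your case $r\le 0$ as $-1\le r\le 0$, since for $r<-1$ the fibre is empty and already handled by your first case.)
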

\begin{proof} The polynomial $X$ is bounded on $\Se(f_1,\ldots,f_4)$,  and we can
apply Schm\"udgen's fibre theorem from \cite{sch2} to $P$.
 For any $r\in \R\setminus[-1,0]$, the preordering
$$P_{r}:= P+ (X-r)=\PO(f_1,\ldots,f_4,X-r,r-X)$$ describes a compact semi-algebraic set
and therefore has (SMP) by \cite{sch1} (even the
($\ddagger$)-property). For $r\in [-1,0]$, the preordering
$\PO(Y^3,Y+r,1-r Y)\subseteq \R[Y]$ is saturated. This follows
from \cite{km}, Theorem 2.2. So  $P_r=\PO(f_1,\ldots,f_4,
X-r,r-X)$ in $\R[X,Y]$ is saturated as well. In particular, $P_r$
has (SMP). So by \cite{sch2}, the whole preordering $P$ has (SMP).
\end{proof}
The next result is a characterization of $P^{\ddagger}$. We write
$$\PO(a_1,\ldots,a_s)_d$$ for the set of elements
having a representation in $\PO(a_1,\ldots,a_s)$ with sums of
squares of elements of degree $\leq d$.

\begin{proposition}\label{dd}A polynomial $f\in\R[X,Y]$ belongs to
$\PO(f_1,f_2,f_3,f_4)^{\ddagger}$ if and only if there is some
$d\in\N$ such that for all $r\in[-1,1]$, $f(r,Y)$ belongs to
$\PO(f_1(r,Y),\ldots,f_4(r,Y))_d$ in $\R[Y]$.
\end{proposition}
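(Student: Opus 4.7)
The strategy is to exploit the fact that $f_4=1-X^2\in P$ forces $X(\al)\in[-1,1]$ on every $\al\in\Se(P)$, so specialization $X\mapsto r$ for $r\in[-1,1]$ defines an $\R$-algebra homomorphism $\R[X,Y]\to\R[Y]$ sending $P$ into the fiber preordering
$$P_r:=\PO(f_1(r,Y),\ldots,f_4(r,Y))\subseteq\R[Y].$$
The plan is to prove each direction by transferring representations between $P$ and the family $\{P_r\}_{r\in[-1,1]}$, using the stability theory of one-variable preorderings to carry uniform degree bounds back and forth.

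For the direction $(\Rightarrow)$, fix $q\in\R[X,Y]$ with $f+\ep q\in P$ for all $\ep>0$ and set $M=\max(\deg_Y f,\deg_Y q)$. Specializing $X=r$ gives $f(r,Y)+\ep q(r,Y)\in P_r$ for every $r\in[-1,1]$ and every $\ep>0$, hence $f(r,Y)\in P_r^{\dd}$. The proof of Proposition~\ref{smp} shows that each $P_r$ is saturated, hence closed (as $\R[Y]$ is countable dimensional), so already $f(r,Y)\in P_r$. The crucial extra ingredient is that each $P_r$ is \emph{stable} as a one-variable preordering, and a stability map can be chosen to depend only on $M$ and on the $Y$-degrees of the generators $f_i(r,Y)$, which are uniformly bounded by $3$ in $r$. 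This yields a single $d=d(M)$ with $f(r,Y)\in(P_r)_d$ for all $r\in[-1,1]$.

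For the direction $(\Leftarrow)$, assume $f(r,Y)\in(P_r)_d$ for all $r\in[-1,1]$. I plan to construct $q\in\R[X,Y]$ so that $f+\ep q\in P$ for every $\ep>0$. Fix a sufficiently large $M'$ and consider the finite-dimensional convex cone
$$\mathcal C_r=\left\{\sum_{e\in\{0,1\}^4}\si_e(Y)\prod_i f_i(r,Y)^{e_i}\;:\;\si_e\in\sum\R[Y]_{\le d}^2\right\}\subseteq\R[Y]_{\le M'}.$$
The hypothesis says $f(r,Y)\in\mathcal C_r$ for every $r\in[-1,1]$. The feasibility set $\{(r,(\si_e))\in[-1,1]\times\R^N: \sum_e\si_e(Y)\prod_i f_i(r,Y)^{e_i}=f(r,Y),\ \si_e\in\sum\R[Y]_{\le d}^2\}$ is closed and semialgebraic with nonempty $r$-fibers, so a semialgebraic selection yields continuous coefficient functions $r\mapsto\si_{e,r}$. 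Polynomial approximation in $r$ (applied to the square roots in a fixed Gram-matrix parametrization of the sum-of-squares cone) then produces polynomial lifts $\tilde\si_e(X,Y)\in\sum\R[X,Y]^2$ and an identity
$$f(X,Y)=\sum_e\tilde\si_e(X,Y)\prod_i f_i(X,Y)^{e_i}+E(X,Y),$$
where the remainder $E$ can be made arbitrarily small in sup norm on the strip $[-1,1]\times\R$ restricted to any fixed $Y$-degree. This error is then absorbed by a fixed multiple $\ep q$, where $q$ is built from $f_4=1-X^2$ and sufficiently high even powers of $(1+Y^2)$, exploiting that $1-X^2\in P$ provides the slack to convert a small error across the bounded $X$-direction into an element of $P$.

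The main obstacle is the $(\Leftarrow)$ direction: passing from a pointwise family of fiber representations to a single polynomial identity over $\R[X,Y]$. Semialgebraic selection only produces continuous coefficients and polynomial approximation only near-identities, so the essential trick is to engineer a universal majorant $q$ whose presence in $P$ dominates the approximation error at every scale $\ep$. This is the analogue, in the present noncompact two-dimensional setting, of the mechanism driving Theorem~5.3 of \cite{kms} for cylinders with compact cross section, and the technical heart of the proof lies in making this absorption estimate precise.
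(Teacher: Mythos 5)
The decisive gap is in your forward direction. You specialize first and then try to recover the uniform degree bound from two claims that are both false. First, the fibre preorderings $P_r$ are saturated only for $r\in[-1,0]$; for $r\in(0,1]$ the proof of Proposition \ref{smp} only gives (SMP) and the $\dd$-property, and in fact $P_1=\PO(Y^3,Y+1,1-Y)$ is \emph{not} saturated --- this is exactly how the Corollary following Proposition \ref{dd} produces the counterexample ($Y\geq 0$ on the fibre at $r=1$ but $Y\notin P_1$). Since $P_r$ has (SMP) but is not saturated for such $r$, it is not closed, so $f(r,Y)\in P_r^{\dd}$ does not give $f(r,Y)\in P_r$ the way you argue. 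Second, and more fatally, your ``crucial extra ingredient'' --- a stability map for $P_r$ depending only on the degree of the element and uniform in $r\in[-1,1]$ --- is false: Example \ref{exam} exhibits $2Y+r\in P_r$ of degree $1$ for every $r\in(0,1]$ for which no degree bound uniform in $r$ exists (compare also Theorem 8.2.6 of \cite{pd}). If your uniform-stability lemma were true, the same reasoning would put $2Y+X$ into $P^{\dd}$, which the paper shows is not the case; so this step cannot be repaired within your strategy.

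The mechanism the paper uses extracts the degree bound \emph{before} specializing: in a representation of $f+\ep q$ the total degrees of the sums of squares $\si_e^{(\ep)}$ may grow as $\ep\to 0$, but their $Y$-degrees cannot, because $\Se(f_1,\ldots,f_4)$ contains the cylinder $[-1,0]\times[1,\infty)$ and every summand is nonnegative there, so no cancellation of leading terms in $Y$ can occur; hence all $Y$-degrees are bounded by $\deg_Y(f+\ep q)$, independent of $\ep$. Specializing $X=r$ then places $f(r,Y)+\ep q(r,Y)$ in $(P_r)_d$ for one fixed $d$ and all $\ep>0$, and closedness of the truncation $(P_r)_d$ (\cite{ps}, Proposition 2.6) --- a fact your argument never uses --- gives $f(r,Y)\in(P_r)_d$. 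Your backward direction is in the right spirit (patching fibre representations over $[-1,1]$ and absorbing a small error using $1-X^2$), which is essentially how Theorem \ref{main} and Theorem \ref{ddapp} proceed via partitions of unity and Stone--Weierstrass; but as written the continuous-selection and error-absorption steps are only asserted, whereas the paper simply deduces this implication from Theorem \ref{ddapp}.
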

\begin{proof}
The "if"-part is a consequence of Theorem $\ref{ddapp}$ below (or
can already be obtained by looking at the proof of Theorem 5.3. in
\cite{kms}).

For the "only if"-part assume $f$ belongs to
$\PO(f_1,\ldots,f_4)^{\ddagger}$. So there is some $q\in\R[X,Y]$
and sums of squares $\si_e^{(\ep)}\in\sum\R[X,Y]$ for all $\ep>0$
and $e\in\{0,1\}^4$ such that $$f+\ep q = \sum_e
\si_e^{(\ep)}f_1^{e_1}\cdots f_4^{e_4}.$$ Note that the total
degree of the $\si_e^{(\ep)}$ may rise with $\ep$ getting smaller.
However, the degree as polynomials in $Y$ cannot rise; it is
bounded by the $Y$-degree of $f+\ep q$, which does not change with
$\ep$. This is because the set $\Se(f_1,\ldots,f_4)$ contains the
cylinder $[-1,0]\times [1,\infty]$, so whenever one adds two
polynomials which are nonnegative on it, the leading terms as
polynomials in $Y$ cannot cancel. So the degree in $Y$ of the sum
is the maximum of the $Y$-degrees of the terms.

By evaluating in $X=r$, this means that $f(r,Y)+\ep q(r,Y)$
belongs to $$\PO(f_1(r,Y),\ldots,f_4(r,Y))_d$$ for some fixed $d$
and all $r\in[-1,1], \ep>0$. But by \cite{ps}, Proposition 2.6,
this is a closed set in a finite dimensional subspace of $\R[Y]$.
So we get $f(r,Y)\in \PO(f_1(r,Y),\ldots,f_4(r,Y))_d$ for all
$r\in[-1,1]$, the desired result.
\end{proof}

\begin{corollary}
The preordering $P=\PO(f_1,\ldots,f_4)$ does not have the
$\ddagger$-property.
\end{corollary}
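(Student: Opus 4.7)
The plan is to produce a single polynomial $f\in\Pos(\Se)\setminus P^{\dd}$. Since Proposition \ref{smp} gives (SMP), i.e.\ $\Pos(\Se)=P^{\vv}$, such an $f$ simultaneously kills both $P^{\dd}=P^{\vv}$ and the $\dd$-property. My candidate is simply $f=Y$, which lies in $\Pos(\Se)$ because the defining inequality $Y^3\geq 0$ already forces $Y\geq 0$ on $\Se$.

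By Proposition \ref{dd}, to show $Y\notin P^{\dd}$ it suffices to produce a single $r\in[-1,1]$ for which $Y$ fails to lie in $\PO(f_1(r,Y),\ldots,f_4(r,Y))$ at all. I would take any $r\in(0,1]$. The generator $f_4(r,Y)=1-r^2$ is a nonnegative constant, either zero or a square in $\R[Y]$, so it drops out, and the fiber preordering reduces to $\PO(Y^3,Y+r,1-rY)$. The task is thus to show $Y\notin\PO(Y^3,Y+r,1-rY)$ in $\R[Y]$.

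For this, I would argue by contradiction: write
$$Y=\sum_{e\in\{0,1\}^3}\si_e\,(Y^3)^{e_1}(Y+r)^{e_2}(1-rY)^{e_3}$$
with each $\si_e$ an SOS in $\R[Y]$, and evaluate at $Y=0$. This yields $\si_{000}(0)+\si_{001}(0)+r\bigl(\si_{010}(0)+\si_{011}(0)\bigr)=0$, and since $r>0$ and each $\si_e(0)\geq 0$ all four quantities vanish. A univariate SOS vanishing at a point is divisible by the square of the corresponding linear form, so $Y^2$ divides each of $\si_{000},\si_{001},\si_{010},\si_{011}$; the remaining four terms carry an explicit factor of $Y^3$. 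Thus the entire right-hand side is divisible by $Y^2$, contradicting $Y^2\nmid Y$. The conceptual work is absorbed by Proposition \ref{dd}, so I do not foresee any real obstacle; the only point worth double-checking is that $f_4(r,Y)$ genuinely contributes nothing to the fiber preordering at each $r\in(0,1]$, so that the argument reduces cleanly to the three generators $Y^3$, $Y+r$, $1-rY$.
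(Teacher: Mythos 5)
Your proposal is correct and follows essentially the same route as the paper: take $f=Y\in\Pos(\Se)$, invoke the ``only if'' direction of Proposition \ref{dd}, and show $Y$ is not in a fibre preordering $\PO(Y^3,Y+r,1-rY)$ by evaluating a putative representation at $Y=0$ and deriving the contradiction $Y^2\mid Y$. The paper simply fixes the fibre $r=1$ (generators $Y^3,\,Y+1,\,1-Y$), while you allow any $r\in(0,1]$; this difference is immaterial.
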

\begin{proof}
The polynomial $Y$ is obviously nonnegative on
$\Se(f_1,\ldots,f_4)$. However, it does not belong to the
preordering
$$\PO(f_1(1,Y),\ldots,f_4(1,Y))=\PO(Y^3,Y+1,1-Y)\subseteq \R[Y].$$
Indeed, writing down a representation and evaluating in $Y=0$,
this shows that $Y^2$ divides $Y$, a contradiction. So in view of
Proposition $\ref{dd}$, $Y$ can not belong to $P^{\ddagger}$.
\end{proof}
\begin{remark}
Note that $Y$ is not in $P^{\ddagger}$ as it fails to be in the
preordering corresponding to the fibre $X=1$. However, Proposition
$\ref{dd}$ even demands all the polynomials $f(r,Y)$ to have
representations in the fibre-preorderings
$$\PO(f_1(r,Y),\ldots,f_4(r,Y))$$ with \textit{simultaneous}
degree bounds, for $f$ to be in $P^{\ddagger}$. Indeed, there are
examples of polynomials belonging to all of the
fibre-preorderings, but failing the degree-bound condition (and so
also not belonging to $P^{\ddagger}$). We will give one here, as
it gives a justification for one of the assumptions in Theorems
\ref{main} and \ref{ddapp} below.
\end{remark}

\begin{example}\label{exam}
Take $f= 2Y+X$, which belongs to $\Pos(\Se(f_1,\ldots,f_4))$. For
any $r\in[-1,1]$, $f(r,Y)=2Y+r$ belongs to
$\PO(f_1(r,Y),\ldots,f_4(r,Y))$; for $r>0$ as $f(r,Y)$ is strictly
positive on the corresponding compact semi-algebraic set (so use
\cite{sch1}), for $r\in[-1,0]$, the fibre preordering is
saturated, as mentioned in the proof of Proposition $\ref{smp}$.

However, for $r\searrow 0$, there can be no bound on the degree of
the sums of squares in the representation. Indeed, for $r>0$,
write down a representation \begin{equation}\label{re}2Y+r
=\sum_{e\in\{0,1\}^3} \si_{e}^{(r)}Y^{3e_1}(Y+r)^{e_2}(1-r
Y)^{e_3},\end{equation} where the $\si_e^{(r)}$ are sums of
squares. Evaluating in $Y=0$, this shows
\begin{equation}\label{ab}\si_{(0,1,0)}^{(r)}(0)+\si_{(0,1,1)}^{(r)}(0)\leq
1.\end{equation} Now if the degrees of the $\si_e^{(r)}$ could be
bounded for all $r>0$, we could write down a first order logic
formula saying that we have representations as in ($\ref{re}$) for
all $r>0$. We add the statement ($\ref{ab}$) to the formula. By
Tarski's Transfer Principle, it holds in any real closed extension
field of $\R$. So take such a representation in some
non-archimedean real closed extension field $R$ for some $r>0$
which is infinitesimal with respect to $\R$. The same argument as
for example in \cite{kms}, Example 4.4. (a) shows that we can
apply the residue map $\mathcal{O}\rightarrow
\mathcal{O}/\mathfrak{m}=\R$ to the coefficients of all the
polynomials occurring in this representation. Here, $\mathcal{O}$
denotes the convex hull of $\R$ in $R$. This is a valuation ring
with maximal ideal $\mathfrak{m}$.

So we get a representation
\begin{align*}2Y =& \ \si_{(0,0,0)}+\si_{(1,0,0)}Y^3+\si_{(0,1,0)}Y +
\si_{(0,0,1)} +\si_{(1,1,0)}Y^4\\&+\si_{(1,0,1)}Y^3
+\si_{(0,1,1)}Y +\si_{(1,1,1)}Y^4\end{align*} with sums of squares
$\si_e$ in $\R[Y]$ fulfilling
\begin{equation}\label{con}\si_{(0,1,0)}(0)+\si_{(0,1,1)}(0)\leq 1.\end{equation} As no cancellation
of highest degree terms can occur, we get
$$0=\si_{(0,0,0)}=\si_{(1,0,0)}=\si_{(0,0,1)}=\si_{(1,1,0)}=\si_{(1,0,1)} = \si_{(1,1,1)}$$ as
well as $$\si_{(0,1,0)}+\si_{(0,1,1)}= 2.$$ This last fact
obviously contradicts ($\ref{con}$).

So for $2Y+X$, the degree bound condition on the fibres fails,
although the polynomial belongs to all of the fibre preorderings.
In view of Proposition $\ref{dd}$, it does not belong to
$\PO(f_1,\ldots,f_4)^{\ddagger}$. This shows that the "degree
bound"-assumption in Theorems $\ref{main}$ and \ref{ddapp} below
is really necessary.

Note also that the example is an explicit illustration of Theorem
8.2.6 from \cite{pd}, where the general impossibility of a certain
degree bound for Schm\"udgen's Theorem from \cite{sch1} is proved.
\end{example}

\begin{remark}
The above example answers the question in \cite{sch2}, whether the
fibre theorem holds for the $\ddagger$-property instead of (SMP).
Indeed, we have shown in the proof of Proposition $\ref{smp}$ that
all the fibre preorderings $P_{r}$ do not only have (SMP), but
even the $\ddagger$-property. As $P$ itself does not have the
$\ddagger$-property, this gives a negative answer to the question.
However, we will prove a result below that allows to use a
dimension reduction  when examining the $\ddagger$-property  under
certain conditions.

\end{remark}

\section{A Fibre Theorem for Sequential Closures}\label{haupt}

We want to prove a fibre theorem in the spirit of \cite{sch2}, to
be able to examine the sequential closure of a quadratic module in
terms of (easier) fibre-modules. We consider the following general
setup. Let $A,B$ be commutative $\R$-algebras with $1$, let $X$ be
a compact Hausdorff space, and assume we have algebra
homomorphisms $\ph\colon B\rightarrow A$ and $\ \hat{} \colon
B\rightarrow C(X,\R)$ (homomorphisms are always assumed to map $1$
to $1$):

$$\xymatrix{ A  \\  B \ar@{->}^{\widehat{}\quad}[r] \ar@{->}^{\ph}[u] & C(X,\R)}$$

We assume that the image of $B$ in $C(X,\R)$ separates points of
$X$, i.e. for any two distinct points $x,y\in X$ there is some
$b\in B$ such that $\hat{b}(x)\neq\hat{b}(y).$ The
Stone-Weierstrass Theorem  implies that $\widehat{B}$ is dense in
$C(X,\R)$ with respect to the sup-norm then.

 For $x\in X$ let $I_x:=\{b\in
B\mid\hat{b}(x)=0\}$ be the vanishing ideal of $x$ in $B$, and
$J_x$ the ideal in $A$ generated by $\ph(I_x)$. The homomorphism
$\ph$ makes $A$ a $B$-module in the usual sense of commutative
algebra (not to be confused with quadratic modules!). For a
$B$-submodule $W$ of $A$
 we write $$J_x(W)=\left\{\sum_{i=1}^n w_i
\ph(b_i)\mid n\in\N, w_i\in W, b_i\in I_x\right\}.$$ So $J_x(W)$
is a $B$-submodule of $W$. We have $J_x(A)=J_x$.

Now let $M\subseteq A$ be a quadratic module. For any $x\in X$,
$M+ J_x$ is again a quadratic module, called the
\textit{fibre-module} of $M$ with respect to $x$. Our first goal
is to prove the following abstract fibre theorem:

\begin{theorem}\label{main} Let $A,B$ be commutative
$\R$-algebras, $X$ a compact Hausdorff space, $\ph\colon
B\rightarrow A$ and $\ \hat{} \colon B\rightarrow C(X,\R)$ algebra
homomorphisms, such that $\hat{B}$ separates points of $X$. Let
$M\subseteq A$ be a quadratic module and assume $\ph(b)\in M$
whenever $\hat{b}>0$ on $X$. Then for any finitely generated
$B$-submodule $W$ of $A$ we have $$\bigcap_{x\in X} M+
J_x(W)\subseteq M^{\dd}.$$
\end{theorem}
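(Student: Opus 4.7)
I would prove this by a partition-of-unity argument in the spirit of Scheiderer's fibre theorem, but executed explicitly enough to produce a single witness $q$ for membership in $M^{\dd}$. Fix generators $w_1,\ldots,w_n$ of $W$ as a $B$-module, so that every element of $J_x(W)$ has the form $\sum_j\ph(d_j)w_j$ with $d_j\in I_x$. The hypothesis $f\in M+J_x(W)$ then yields, for each $x\in X$, a representation $f=m_x+\sum_j\ph(d_{x,j})w_j$ with $m_x\in M$ and $\hat d_{x,j}(x)=0$. My candidate is
\[
q \;:=\; 1+f^2+\sum_{j=1}^{n} w_j^2,
\]
which already lies in $M$; I will verify $f+\ep q\in M$ for every $\ep>0$ by constructing, for each $\ep$, a suitable sum-of-squares identity.

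Fix $\ep>0$. Using continuity and $\hat d_{x,j}(x)=0$, I pick a small $\eta>0$ and neighborhoods $V_x$ on which $|\hat d_{x,j}|<\eta$; compactness of $X$ produces a finite subcover $V_{x_1},\ldots,V_{x_k}$. Since $\widehat B$ is dense in $C(X,\R)$ by Stone--Weierstrass, I find $u_1,\ldots,u_k\in B$ so that the $\hat u_i^2$ form an approximate partition of unity subordinate to this cover: $\hat u_i^2\leq\delta$ off $V_{x_i}$ and $|\widehat{\sum_i u_i^2}-1|\leq\delta$ on $X$, with $\delta>0$ as small as I wish. Setting $p:=\sum_i u_i^2\in B$ and $e_j:=\sum_i d_{x_i,j}\,u_i^2\in B$, splitting the sum defining $\hat e_j(y)$ according to whether $y\in V_{x_i}$ yields $\|\hat e_j\|_\infty\leq\eta(1+\delta)+kD\delta$, where $D=\max_{i,j}\|\hat d_{x_i,j}\|_\infty$, and $\|\widehat{1-p}\|_\infty\leq\delta$; both bounds become arbitrarily small by fixing $\eta$ first and then $\delta$.

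Multiplying the representation at $x_i$ by $\ph(u_i)^2$ and summing gives $\ph(p)\,f=\sum_i\ph(u_i)^2 m_{x_i}+\sum_j\ph(e_j)\,w_j$, whose first summand is in $M$ by the quadratic module property. Applying the identity $sw+\tfrac{t}{2}w^2+\tfrac{1}{2t}s^2=\tfrac{1}{2t}(tw+s)^2\in M$ (valid for any $t>0$) with $(w,s)=(w_j,\ph(e_j))$ and with $(w,s)=(f,\ph(1-p))$, and using $\ph(p)+\ph(1-p)=1$, I deduce
\[
f + \tfrac{t}{2}\sum_j w_j^2 + \tfrac{t'}{2}f^2 + \tfrac{1}{2t}\sum_j \ph(e_j^2) + \tfrac{1}{2t'}\ph\!\left((1-p)^2\right) \;\in\; M.
\]
The hypothesis on $M$ enters decisively at the last step: for any $K\geq\|\hat e_j\|_\infty$ and $\delta_0>0$, the element $K^2+\delta_0-e_j^2\in B$ has strictly positive image in $C(X,\R)$, hence $\ph(K^2+\delta_0-e_j^2)\in M$, so that $\ph(e_j^2)$ equals the real number $K^2+\delta_0$ modulo $M$; analogously for $(1-p)^2$ with some $K'\geq\|\widehat{1-p}\|_\infty$. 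Substituting these bounds and setting $t=t'=2\ep$ gives $f+\ep(\sum_j w_j^2+f^2)+C\in M$, where $C=\frac{n(K^2+\delta_0)+(K')^2+\delta_0'}{4\ep}$. Provided $C\leq\ep$---achievable by shrinking $\eta$ first (which fixes the cover and the bound $D$), then $\delta$ and the $\delta_0$'s---the decomposition $f+\ep q=(f+\ep(\sum_j w_j^2+f^2)+C)+(\ep-C)$ exhibits $f+\ep q$ as the sum of an element of $M$ and the nonnegative real $\ep-C\in M$. The main obstacle is precisely this bookkeeping: the parameter choices must be made in the correct dependency order so that the single constant $C$ drops below $\ep$ simultaneously with the identity being set up, which is what lets one $q$ work for every $\ep>0$ and yields $f\in M^{\dd}$.
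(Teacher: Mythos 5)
Your proof is correct, and it takes a genuinely different route from the paper's. The paper reduces first to the case where the generators of $W$ are squares and the elements $m_x$ lie in $M\cap W$, then builds the element $a=f-\sum_k\ph(g_k)^2m_{x_k}$ and invokes a separate patching result (Proposition \ref{loc}): from representations $a=\sum_i\ph(b_i^{(x)})a_i$ with coefficients small \emph{at} $x$, it produces one exact representation $a=\sum_i\ph(b_i)a_i$ with $|\hat b_i|<\ep$ on all of $X$, after which the hypothesis is used once, via $\ph(b_i+\ep)\in M$, and the witness is $q=\sum_i a_i$. You bypass the patching lemma entirely: the exact identity $\ph(p)f=\sum_i\ph(u_i)^2m_{x_i}+\sum_j\ph(e_j)w_j$ plus the split $f=\ph(p)f+\ph(1-p)f$ lets you absorb every cross term by the completion-of-squares identity $sw+\tfrac{t}{2}w^2+\tfrac{1}{2t}s^2=\tfrac{1}{2t}(tw+s)^2$, and the positivity hypothesis enters instead through $\ph(K^2+\delta_0-e_j^2)\in M$ and $\ph((K')^2+\delta_0'-(1-p)^2)\in M$, which converts $\ph(e_j^2)$ and $\ph((1-p)^2)$ into small constants; your bookkeeping of $\eta$, the cover, $\delta$ and the $\delta_0$'s is in a consistent dependency order, so the final constant $C\leq\ep$ is attainable for each $\ep$ while $q=1+f^2+\sum_jw_j^2$ stays fixed. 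What each approach buys: yours is more self-contained (no Proposition \ref{loc}, no need to make the generators squares or to force $m_x\in W$, and it works verbatim for arbitrary generators $w_j$), while the paper's yields the structurally cleaner conclusion that $f+\ep\sum_ia_i\in M$ with a witness depending only on the module $W$ and not on $f$ itself --- a feature exploited later in Remark \ref{rem}(3) and in the optimization section; your $q$ contains $f^2$, so recovering that uniformity would require a small extra argument (e.g.\ replacing $f^2$ by squares of basis elements of the finite-dimensional space $f$ is drawn from and adjusting the parameter choices).
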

 The requirement on $W$ can be understood as a "degree
 bound condition" as in Proposition \ref{dd}. Example \ref{exam} shows that $\bigcap_{x\in X}
 M+J_x\subseteq M^{\dd}$ is not true under the remaining
 assumptions in general (we will discuss this in more detail
 below).

 To prove the Theorem, we first need the following technical
Proposition.

\begin{proposition}\label{loc} Let $A,B$ be
$\R$-algebras and $\ph\colon B\rightarrow A$ an algebra
homomorphism. Let $X$ be a compact Hausdorff space  and $\
\hat{}\colon B\rightarrow C(X,\R)$ an algebra homomorphism whose
image separates points of $X$. Assume $a,a_1,\ldots,a_l\in A$ and
$\ep>0$ are such that for all $x\in X$ there is an identity
$$a=\sum_{i=1}^l \ph(b_i^{(x)})\cdot a_i,$$ with $b_i^{(x)}\in B$
and $|\widehat{b_i^{(x)}}(x)| < \ep$ for all $i$. Then there are
$b_1,\ldots,b_l\in B$ with $|\hat{b}_i| < \ep$ on $X$ for all $i$
and $$a=\sum_{i=1}^{l} \ph(b_i)\cdot a_i.$$
\end{proposition}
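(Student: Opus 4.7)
The plan is to glue the pointwise representations into a single global one by means of a partition of unity subordinate to a finite open cover of $X$, then lift that partition to $B$ approximately via Stone--Weierstrass, while taking care to enforce the algebraic identity $\sum_j \beta_j = 1$ in $B$ \emph{exactly} (not just approximately in $C(X,\R)$), so that the identity $a=\sum_i\ph(b_i)a_i$ we need actually holds in $A$.

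Concretely, I would first use continuity to enlarge the pointwise inequalities to neighborhoods: for each $x\in X$ pick $\ep_x<\ep$ with $|\widehat{b_i^{(x)}}(x)|<\ep_x$ for all $i$, then choose an open neighborhood $U_x\ni x$ on which $|\widehat{b_i^{(x)}}(y)|<\ep_x$ for all $y$ and all $i$. Compactness of $X$ gives a finite subcover $U_{x_1},\ldots,U_{x_N}$; write $b_i^{(j)}:=b_i^{(x_j)}$ and set $\ep':=\max_j\ep_{x_j}<\ep$. Next, I would pick a continuous partition of unity $\rho_1,\ldots,\rho_N\in C(X,\R)$ with $\rho_j\geq 0$, $\sum_j\rho_j=1$ and $\mathrm{supp}(\rho_j)\subseteq U_{x_j}$, which exists since compact Hausdorff spaces are normal. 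By Stone--Weierstrass (applicable since $\hat{B}$ is a unital subalgebra of $C(X,\R)$ separating points), for any $\delta>0$ I can choose $\beta_1,\ldots,\beta_{N-1}\in B$ with $\|\hat{\beta}_j-\rho_j\|_\infty<\delta$, and then \emph{define} $\beta_N:=1-\beta_1-\cdots-\beta_{N-1}\in B$. This forces $\sum_j\beta_j=1$ in $B$, while $\|\hat{\beta}_N-\rho_N\|_\infty\leq(N-1)\delta$.

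I would then set $b_i:=\sum_{j=1}^N\beta_j\, b_i^{(j)}\in B$. The algebraic identity follows cleanly:
\[
\sum_{i=1}^l\ph(b_i)a_i \;=\; \sum_j\ph(\beta_j)\sum_i\ph(b_i^{(j)})a_i \;=\; \sum_j\ph(\beta_j)\cdot a \;=\; \ph(1)\cdot a \;=\; a,
\]
using the hypothesis $a=\sum_i\ph(b_i^{(j)})a_i$ for each $j$. For the sup-norm bound I split
\[
\hat{b}_i(y)=\sum_j\rho_j(y)\,\widehat{b_i^{(j)}}(y)+\sum_j(\hat{\beta}_j(y)-\rho_j(y))\,\widehat{b_i^{(j)}}(y).
\]
The first sum is bounded in absolute value by $\ep'$ since $\rho_j(y)>0$ implies $y\in U_{x_j}$, where $|\widehat{b_i^{(j)}}(y)|<\ep_j\leq\ep'$. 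The second sum is bounded by $2(N-1)\delta\cdot M$, where $M:=\max_{i,j}\|\widehat{b_i^{(j)}}\|_\infty$ is finite (finite cover, compact $X$). Choosing $\delta$ with $2(N-1)\delta M<\ep-\ep'$ gives $|\hat{b}_i|<\ep$ on all of $X$, as desired.

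The main obstacle, and the reason this requires some care rather than being a routine approximation, is that two incompatible-looking conditions must hold simultaneously: the reconstruction $a=\sum_i\ph(b_i)a_i$ must be an \emph{exact} identity in $A$, whereas the control $|\hat{b}_i|<\ep$ is only a continuous approximation on $X$. The exact identity forces $\sum_j\ph(\beta_j)\cdot a=a$, which I achieve by imposing $\sum_j\beta_j=1$ in $B$ algebraically, defining one $\beta_N$ as $1$ minus the others rather than approximating all $N$ functions independently. This in turn is what produces the approximation error we must beat against the strictness gap $\ep-\ep'$ obtained from continuity.
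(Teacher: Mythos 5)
Your proposal is correct and takes essentially the same route as the paper's proof: a finite subcover with a subordinate partition of unity, Stone--Weierstrass approximation of all but one partition function in $B$ with the last one defined as $1$ minus the others so that the sum is exactly $1$ (which yields the exact identity $a=\sum_i\ph(b_i)a_i$), and the same sup-norm estimate against a strictness gap. The only cosmetic difference is how the gap is extracted (you build it in via $\ep_x<\ep$, the paper takes $\de=\min_i(\ep-\parallel f_i\parallel)$ after forming the convex combinations), which changes nothing of substance.
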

\begin{proof}

 Every $x\in X$ has an
open neighborhood $U_x$, such that $|\widehat{b_i^{(x)}}|<\ep$ on
$U_x$ for all $i=1,\ldots,l.$ By compactness of $X$ there are
$x_1,\ldots,x_t\in X$, such that $$X= U_{x_1}\cup\cdots\cup
U_{x_t}.$$ If $t=1$, then the result follows, so assume $t\geq 2.$
Choose a partition of unity $e_1,\ldots,e_t$ subordinate to that
cover, i.e. all $e_k$ are continuous functions from $X$ to
$[0,1]$, $\supp(e_k)\subseteq U_{x_k}$ for all $k$, and
$e_1(x)+\cdots + e_t(x)=1$ for all $x\in X.$ Then for
$$f_i:= e_1\cdot \widehat{b_i^{(x_1)}} + \cdots +
e_t\cdot\widehat{b_i^{(x_t)}}$$ we have $$\parallel f_i
\parallel<\ep,$$ where $\parallel\ \parallel$ denotes the sup-norm on
$C(X,\R)$. Let $$\de:=\min\left\{\ \ep- \parallel f_i
\parallel\ \mid i=1,\ldots,l\right\}$$ and choose a positive real number $N$, big enough to
bound the sup-norm of all $\widehat{b_i^{(x_k)}}$.

The image of $B$ in $C(X,\R)$ is dense, by the Stone-Weierstrass
Theorem. So we find $q_1,\ldots,q_{t-1}\in B$ such that $$
\parallel e_k - \hat{q}_k\parallel < \frac{\de}{N(t-1)t}$$
for $k=1,\ldots,t-1$, and we define
$$q_t:=1-\sum_{k=1}^{t-1}q_k.$$ So we have for $k=1,\ldots, t$ $$\parallel e_k - \hat{q}_k \parallel < \frac{\de}{Nt}.$$
We define $$b_i:= q_1\cdot b_i^{(x_1)} + \cdots + q_t\cdot
b_i^{(x_t)}$$ for $i=1,\ldots,l.$ So
\begin{align*}\parallel \hat{b}_i
\parallel & \leq\  \parallel f_i\parallel+ \parallel \hat{b}_i -f_i
\parallel \\ & \leq\ \parallel f_i\parallel+ \sum_{k=1}^t \parallel e_k-\hat{q}_k
\parallel\cdot\parallel \widehat{b_i^{(x_k)}}\parallel \\
& <\ \parallel f_i\parallel +\ \de\\ & \leq\ \ep.\end{align*} Now
as $\sum_{k=1}^t q_k =1$ we have

\begin{align*} a&=  \ph(\sum_{k=1}^t q_k)\cdot a \\& =
\sum_{k=1}^t\left( \ph(q_k)\cdot\sum_{i=1}^l
\ph(b_i^{(x_k)})a_i\right)\\& =\sum_{i=1}^l \ph\left(\sum_{k=1}^t
q_kb_i^{(x_k)}\right)\cdot a_i \\ & =\sum_{i=1}^l\ph(b_i)\cdot
a_i,
\end{align*} which proves the proposition.
\end{proof}

Now we can give the proof of Theorem \ref{main}. It is a
generalization of the proof of Theorem 5.3 from \cite{kms}.

\begin{proof}[Proof of Theorem \ref{main}] Fix a finitely generated $B$-submodule $W$ of $A$. Assume $f\in A$ has a representation $$f=m_x+j_x$$
with $m_x\in M$ and $j_x\in J_x(W),$ for all $x\in X.$ As
$J_x(W)\subseteq W$, we can assume without loss of generality
$m_x\in M\cap W$ for all $x$. Let $a_1,\ldots,a_l$ be generators
of $W$ as a $B$-module. Due to the identity $a=(\frac{a+1}{2})^2 -
(\frac{a-1}{2})^2$ we can assume that all $a_j$ are squares in $A$
(by possibly enlarging $W$). We will now show
$$f+\ep\sum_{i=1}^la_i\in M$$ for all $\ep>0.$ Therefore fix one
such $\ep>0.$ We take representations
$$j_x=\sum_{i=1}^l \ph\left(c_i^{(x)}\right)\cdot a_i,\quad m_x=\sum_{i=1}^l \ph\left(d_i^{(x)}\right)\cdot a_i $$
where all $c_i^{(x)},d_i^{(x)}\in B$ and
$\widehat{c_i^{(x)}}(x)=0$. Now each $x\in X$ has an open
neighborhood $U_x$, such that
$$| \widehat{c_i^{(x)}}|<\frac{\ep}{2} \mbox{ on } U_x$$ for
$i=1,\ldots,l.$ By compactness of $X$ we have
$$X=U_{x_1}\cup\cdots\cup U_{x_t}$$ for some $x_1,\ldots,x_t\in
X.$ Let $e_1,\ldots,e_t$ be a continuous partition of unity
subordinate to that cover. Using the Stone-Weierstrass Theorem, we
approximate the square root of each $e_k$ (which is again a
continuous function) by elements $g_k$ from $B$, such that
$$\sum_{k=1}^t \parallel e_k -\hat{g}_k^2\parallel \cdot
\parallel \widehat{d_i^{(x_k)}}\parallel\ <\frac{\ep}{2}$$ holds for all
$i=1,\ldots,l.$ Here, $\parallel \ \parallel$ denotes the sup-norm
on $C(X,\R)$ again. Define
$$a=f-\underbrace{\sum_{k=1}^t \ph(g_k)^2 \cdot m_{x_k}}_{\in
M}.$$ The proof is complete if we show $a+\ep\sum_{i=1}^la_i\in
M.$ Fix $x\in X$. Then \begin{align*} a &=\sum_{k=1}^t e_k(x)\cdot f - \sum_{k=1}^t \ph(g_k^2)\cdot m_{x_k}   \\
&= \sum_{k=1}^t e_k(x) \cdot (\underbrace{f -m_{x_k}}_{=j_{x_k}})
+\sum_{k=1}^t \left( e_k(x)-\ph(g_k^2)\right) m_{x_k} \\ &=
\sum_{k=1}^t e_k(x)\sum_{i=1}^l \ph\left( c_i^{(x_k)}\right) a_i +
\sum_{k=1}^t \left(e_k(x)-\ph(g_k^2)\right) \sum_{i=1}^l
\ph\left(d_i^{(x_k)}\right)a_i \\ &= \sum_{i=1}^l \left(
\sum_{k=1}^t e_k(x)\ph(c_i^{(x_k)}) \right)\cdot a_i  \\ & \qquad
+ \sum_{i=1}^l \left( \sum_{k=1}^t \left(e_k(x)
-\ph(g_k^2)\right)\ph(d_i^{(x_k)}) \right)\cdot a_i \\
&=\sum_{i=1}^l \ph(b_i^{(x)})\cdot a_i,
\end{align*}
where we define $$b_i^{(x)}=\sum_{k=1}^t e_k(x)\cdot c_i^{(x_k)} +
\left(e_k(x)-g_k^2\right)\cdot d_i^{(x_k)}.$$ By the above
considerations we have $$|\widehat{b_i^{(x)}}(x)|<\ep$$ for all
$i$. So we can apply Proposition \ref{loc} to $a,a_1,\ldots,a_l$
and find
$$a=\sum_{i=1}^l\ph(b_i)\cdot a_i$$ for some $b_i\in B$ with $|\hat{b}_i|<\ep$ on $X$. Thus $$a+\ep\sum_{i=1}^l a_i = \sum_{i=1}^l\ph(b_i+\ep)\cdot a_i \in M,$$
as all $\widehat{b_i+\ep}$  are strictly positive on $X$ and all
$a_i$ are squares.\end{proof}

We demonstrate how to apply Theorem \ref{main}, for a given
algebra $A$ and a quadratic module $M\subseteq A$. Therefore
assume there are $b_1,\ldots,b_t\in A$ with
$\La_i-b_i,b_i-\la_i\in M$ for real numbers $\la_i\leq \La_i$
($i=1,\ldots,t)$. This of course implies that each $b_i$ is
bounded as a function on $\Se(M)\subseteq \V_A$, but the converse
is not always true. Let $B=\R[b_1,\ldots,b_t]$ be the subalgebra
of $A$ generated by the $b_i$ and let $\ph\colon B\rightarrow A$
be the canonical inclusion. Let $\widetilde{M}$ be the quadratic
module in $B$ generated by $\La_1-b_1,b_1-\la_1,\ldots, \La_t-b_t,
b_t-\la_t.$ It is \textit{archimedean}, for example by \cite{pj},
Theorem 4.1. Let $X=\Se(\widetilde{M})\subseteq\V_B$, so $X$ is
compact, and the canonical homomorphism $\hat{}\colon B\rightarrow
C(X,\R)$ fulfills the separating points condition. Now whenever
some $\hat{b}$ is strictly positive on $X$, then $b\in
\widetilde{M},$ by \cite{j}, Theorem 6 (see also \cite{pd} Theorem
5.3.6 and \cite{m}, Theorem 5.4.4). So $\ph(b)\in M$. For any
$x\in X$,
 we have
$\hat{b}_i(x)\in[\la_i,\La_i]$, and $$J_x(W)=\left\{\sum_{i=1}^t
(b_i-\hat{b}_i(x))w_i\mid w_i\in W\right\}$$ holds for any
$B$-module $W$. Thus write for
$r=(r_1,\ldots,r_t)\in\La=\prod_{i=1}^t[\la_i,\La_i]$
$$J_{r}(W)=\left\{\sum_{i=1}^t (b_i-r_i)w_i\mid w_i\in
W\right\}$$ and $J_{r}:=J_{r}(A)=(b_1-r_1,\ldots,b_t-r_t)$. If $M$
is finitely generated as a quadratic module, then $M+J_r$ is also
finitely generated, by the generators of $M$ and $\pm
(b_1-r_1),\ldots, \pm(b_t-r_t)$. The following fibre theorem is
our main result.

\begin{theorem}\label{ddapp} Let $A$ be a commutative $\R$-algebra and $M\subseteq A$ a
quadratic module. Suppose $b_1,\ldots,b_t\in A$ are such that
$$\La_1-b_1,b_1-\la_1,\ldots,\La_t-b_t,b_t-\la_t\in M$$ for some real numbers $\la_i\leq \La_i$ ($i=1,\ldots,t)$.  Then for every finitely
generated $\R[b_1,\ldots,b_t]$-submodule $W$ of $A$ we have
$$\bigcap_{r\in\Lambda} M + J_{r}(W)
\subseteq M^{\dd},$$ where $\Lambda=\prod_{i=1}^t [\la_i,\La_i].$
In particular, if $M$ is finitely generated and all the (finitely
generated) quadratic modules $M+J_{r}$ are closed and stable with
the same stability map, then $M^{\dd}=\overline{M}.$ If all
$M+J_{r}$ are saturated and stable with the same stability map,
then $M$ has the $\dd$-property. (Here, the stability map with
respect to the canonical generators of each $M+J_{r}$ is meant.)
\end{theorem}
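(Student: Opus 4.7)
The plan is to apply Theorem \ref{main} with the setup described immediately before the statement, and then to use stability to produce a single $W$ that works uniformly in $r\in\Lambda$.

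For the first assertion, take $B=\R[b_1,\ldots,b_t]$ with $\varphi$ the inclusion, $\widetilde M\subseteq B$ the quadratic module generated by the $\Lambda_i-b_i$ and $b_i-\lambda_i$, and $X=\Se(\widetilde M)$. As spelled out in the paragraph preceding the statement, $\widetilde M$ is archimedean (so $X$ is compact Hausdorff), the evaluations $\hat b$ separate points of $X$, and $\varphi(b)\in M$ whenever $\hat b>0$ on $X$. Each $x\in X$ corresponds to the point $r(x):=(\hat b_1(x),\ldots,\hat b_t(x))\in\Lambda$, and since the vanishing ideal of $x$ in $B$ is generated by $b_1-\hat b_1(x),\ldots,b_t-\hat b_t(x)$, one checks readily that $J_x(W)=J_{r(x)}(W)$ for any $B$-submodule $W$. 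Hence
$$\bigcap_{r\in\Lambda} M+J_r(W)\ \subseteq\ \bigcap_{x\in X} M+J_x(W)\ \subseteq\ M^{\dd}$$
by Theorem \ref{main}, which proves the first part.

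For the closedness statement, write $a_1,\ldots,a_s$ for the generators of $M$ and assume every $M+J_r$ is closed and stable with common stability map $\phi$. Given $f\in\overline M$, the inclusion $M\subseteq M+J_r$ yields $f\in\overline{M+J_r}=M+J_r$ for every $r\in\Lambda$. Stability applied to $U:=\R f$ supplies a finite-dimensional subspace $V:=\phi(U)$, independent of $r$, together with representations
$$f=\sigma_0+\sum_{i=1}^s\sigma_i a_i+\sum_{j=1}^t(\tau_j^+-\tau_j^-)(b_j-r_j)$$
with $\sigma_i,\tau_j^\pm\in\sum V^2$. The differences $\tau_j^+-\tau_j^-$ all lie in the fixed finite-dimensional $\R$-subspace $V'$ spanned by $\{vv'\mid v,v'\in V\}$; letting $W$ be the $B$-submodule of $A$ generated by (a basis of) $V'$ yields a finitely generated $B$-submodule, independent of $r$, with $f\in M+J_r(W)$ for every $r\in\Lambda$. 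The first part then gives $f\in M^{\dd}$, and since $M^{\dd}\subseteq\overline M$ is automatic, $M^{\dd}=\overline M$.

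The $\dd$-property follows by the same argument once we observe that each $\Pos(\Se(M+J_r))=M+J_r$ is automatically closed (being an intersection of closed halfspaces) and that $\Se(M+J_r)\subseteq\Se(M)$ forces $f\in\Pos(\Se(M+J_r))=M+J_r$ for every $f\in\Pos(\Se(M))$. The main obstacle I foresee is not invoking Theorem \ref{main} but extracting this single finitely generated $B$-submodule $W$ from the stability data: although the representations of $f$ in $M+J_r$ vary with $r$, the coefficients of the linear parts $(b_j-r_j)$ live in an $\R$-subspace that depends only on $U$ through the common stability map, so the $B$-module they generate in $A$ is independent of $r$ as required.
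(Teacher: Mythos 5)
Your proof is correct and takes essentially the same route as the paper: the first inclusion via Theorem \ref{main} applied to $B=\R[b_1,\ldots,b_t]$, $X=\Se(\widetilde{M})$ with the identification $J_x(W)=J_{r(x)}(W)$, and the closed/saturated cases by using the common stability map to pass from $f\in M+J_r$ for all $r$ to membership in all $M+J_r(W)$ for one fixed finitely generated $B$-submodule $W$. Your explicit construction of $W$ from the span of products of elements of $V$ merely spells out the step the paper summarizes as ``by the assumed stability there is a fixed finite dimensional subspace''.
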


\begin{proof} The first part of the theorem is clear from the
above considerations and Theorem \ref{main}.  For the second part,
assume $M$ is finitely generated, say by $f_1,\ldots,f_s$. Then
$M+J_{r}$ is finitely generated as a quadratic module, by the
canonical generators
$$f_1,\ldots,f_s,\pm(b_1-r_1),\ldots,\pm(b_t -r_t).$$
Assume all $M+J_{r}$ are closed (or saturated, respectively) and
stable with the same stability map. Suppose some $f$ belongs to
$\overline{M}$ (or $\Pos(\Se(M))$, respectively). Then $f$ belongs
to all $\overline{M+J_{r}}$ (or $\Pos(\Se(M+J_{r}))$,
respectively), so to all $M+J_{r}$ by our assumption. Now by the
assumed stability there is a \textit{fixed} finite dimensional
$\R$-subspace $W$ of $A$, such that $f$ belongs to all $M +
J_{r}(W)$. So the first part of the theorem yields $f\in M^{\dd}.$
\end{proof}

\begin{remark}\label{rem}(1) In Example \ref{exam}, the polynomial $f=2Y+X$
belongs to all the preorderings $\PO(f_1,\ldots,f_4)+(X-r)$ in
$A=\R[X,Y]$. However, there is no finitely generated
$\R[X]$-submodule $W$ of $\R[X,Y]$ such that $f$ belongs to all
$\PO(f_1,\ldots,f_4)+J_r(W)$. This follows from what we have shown
in Example \ref{exam}. We have also seen  that $f$ does not belong
to $\PO(f_1,\ldots,f_4)^{\dd}.$  Note that $X+1,1-X$ in
$\PO(f_1,\ldots,f_4)$ is fulfilled, as $f_4=1-X^2$, and using an
easy calculation as for example in \cite{km}, Note 2.3 (4). So the
"degree bound condition" is necessary in Theorem \ref{ddapp} and
also in Theorem \ref{main}.

(2) Example \ref{einsdurch} below will show that the assumption
$\La_i-b_i,b_i-\la_i\in M$ for all $i$ can not be omitted in
Theorem \ref{ddapp}. So the same is true for the assumption
$$\hat{b}>0 \mbox{  on } X \Rightarrow \ph(b)\in M$$ in Theorem
\ref{main}.

(3) In case that all the occurring quadratic  fibre-modules
$M+J_r$ in Theorem \ref{ddapp} are saturated and stable with the
same stability map, we get a little bit more than the
$\dd$-property for $M$. We obtain that for every finite
dimensional subspace $V$ of $A$ there is some $q_V\in A$ such that
whenever $f\in\Pos(\Se(M))\cap V$, then $f+\ep q_V\in M$ for all
$\ep>0.$ In other words, the polynomial $q$ from the
$\dd$-property does only depend on the subspace $f$ is taken from,
not on the explicit choice of $f$. This follows from the proof of
Theorem \ref{main}, noting that $q$ does only depend on the
$B$-module $W$, which depends only on $V$ and the stability map
here.
\end{remark}

\section{Applications and Examples}

In this section we give some applications of the fibre theorem.
The first one is the Cylinder Theorem (Theorem 5.3 combined with
Corollary 5.5) from \cite{kms}. See \cite{km,kms} for the
definition of natural generators for semi-algebraic subsets of
$\R$.

\begin{corollary}\label{cylinder} Let $P=\PO(f_1,\ldots,f_s)$ be a finitely generated
preordering in the polynomial ring $\R[X_1,\ldots,X_n,Y]$. Assume
$N-\sum_{i=1}^{n}X_i^2\in P$ for some $N > 0$. Now for all $r\in
\R^n$, the preordering
$$\PO(f_1(r,Y),\ldots,f_s(r,Y))\subseteq \R[Y]$$ describes a basic closed semi-algebraic set $S_{r}$ in $\R$. Suppose the natural
generators for $S_{r}$ are among the $f_1(r,Y),\ldots, f_s(r,Y)$,
whenever $S_{r}$ is not empty. Then $P$ has the $\dd$-property.

If all the fibre sets $S_{r}$ are of the form
$\emptyset,(-\infty,\infty), (-\infty,p],$ $[q,\infty),
(-\infty,p]\cup[q,\infty)$ or $[p,q]$, then the result holds with
$P$ replaced by $M=\QM(f_1,\ldots,f_s).$

\end{corollary}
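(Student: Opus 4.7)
The plan is to apply Theorem~\ref{ddapp} with $b_i=X_i$ for $i=1,\ldots,n$. First I would use the hypothesis $N-\sum_j X_j^2\in P$ to produce bounds: since $N-X_i^2=(N-\sum_j X_j^2)+\sum_{j\neq i}X_j^2\in P$, the identity $\sqrt N\pm X_i=\frac{1}{2\sqrt N}(\sqrt N\pm X_i)^2+\frac{1}{2\sqrt N}(N-X_i^2)$ puts $\sqrt N\pm X_i$ in $P$. Setting $\la_i=-\sqrt N$, $\La_i=\sqrt N$, $\La=[-\sqrt N,\sqrt N]^n$, and $B=\R[X_1,\ldots,X_n]$, the problem reduces to showing that all fibre preorderings $P+J_r$ with $r\in\La$ are simultaneously saturated and stable with a single stability map.

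Next I would identify the fibre: an element $g\in\R[X,Y]$ lies in $P+J_r$ if and only if its restriction $g(r,Y)$ lies in $P_r:=\PO(f_1(r,Y),\ldots,f_s(r,Y))\subseteq\R[Y]$. One direction is immediate by evaluation at $X=r$; for the other, lift any $\R[Y]$-representation of $g(r,Y)$ to $\R[X,Y]$ and observe that $g$ minus this lift vanishes on $X=r$, hence lies in $J_r$. In particular $\Se(P+J_r)=\{r\}\times S_r$, so saturation of $P+J_r$ reduces to saturation of $P_r$ in $\R[Y]$. The natural-generators hypothesis together with \cite{km}, Theorem~2.2, handles all nonempty fibres, and for $S_r=\emptyset$ the one-variable Positivstellensatz directly gives $-1\in P_r$, whence $P_r=\R[Y]=\Pos(\emptyset)$.

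The hard step will be uniform stability. Given a finite-dimensional subspace $U\subseteq\R[X,Y]$ of bounded total degree $D$, I would argue as follows: for $g\in (P+J_r)\cap U$ the restriction $g(r,Y)$ has $Y$-degree at most $D$, and the classical stability of one-variable preorderings with natural generators provides a representation $g(r,Y)=\sum_e\tau_e^{(r)}(Y)\prod_i f_i(r,Y)^{e_i}$ whose SOS coefficients $\tau_e^{(r)}$ have $Y$-degree bounded in terms of $D$ and the $Y$-degrees of the $f_i$ alone. The delicate point, and the main obstacle, is that this bound must be uniform in $r\in\La$; this requires inspecting the one-variable natural-generator stability results and verifying that the bound depends only on the combinatorial shape of $S_r$ and on the $Y$-degrees of the $f_i$, not on their numerical coefficients. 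Granting this, the $\tau_e^{(r)}$ may be chosen from a fixed finite-dimensional subspace of $\R[Y]\subseteq\R[X,Y]$; writing the remainder $g-\sum_e\tau_e^{(r)}(Y)\prod_i f_i^{e_i}\in J_r$ as $\sum_i(X_i-r_i)h_i$ with $h_i$ of bounded total degree and applying $h(X_i-r_i)=\bigl(\tfrac{h+1}{2}\bigr)^2(X_i-r_i)+\bigl(\tfrac{h-1}{2}\bigr)^2(r_i-X_i)$ yields an $r$-independent finite-dimensional $V$ that serves as a common stability witness. Theorem~\ref{ddapp} then delivers the $\dd$-property for $P$.

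For the quadratic-module statement I would observe that in each of the listed shapes the natural generators of $S_r$ produce a one-variable quadratic module that coincides with the corresponding preordering: in the cases $\emptyset,\R,(-\infty,p],[q,\infty)$ and $(-\infty,p]\cup[q,\infty)$ there is at most one natural generator, so $\QM=\PO$ trivially, while for $S_r=[p,q]$ the classical fact that every polynomial nonnegative on a compact interval has a representation $\sigma_0+\sigma_1(Y-p)+\sigma_2(q-Y)$ with SOS $\sigma_i$ gives $\QM(Y-p,q-Y)=\PO(Y-p,q-Y)=\Pos([p,q])$. Hence each fibre quadratic module $M+J_r$ inherits saturation and uniform stability from the preordering argument above, and a second application of Theorem~\ref{ddapp} gives the $\dd$-property for $M$.
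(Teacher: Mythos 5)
Your route is the same as the paper's: put $\sqrt N\pm X_i$ into $P$, identify the fibres $P+(X_1-r_1,\ldots,X_n-r_n)$ with the one-variable preorderings $\PO(f_1(r,Y),\ldots,f_s(r,Y))$, show all fibres are saturated and stable with one common stability map, and apply Theorem~\ref{ddapp}. The difference is where the real work sits: the paper obtains the fibre input --- saturation together with stability via an $r$-independent stability map --- from \cite{kms}, Section~4, whereas you sketch it and then explicitly assume it (``granting this''). That assumed step is the entire nontrivial content of the corollary: one must check that the degree bounds in the one-variable representations with natural generators depend only on the degrees involved and the shape of $S_r$, not on the coefficients of the $f_i(r,Y)$ (whose $Y$-degrees can even drop at special $r$, and whose role as natural generators can change with $r$). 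This uniformity is exactly what \cite{kms}, Section~4 establishes, so your argument as written has a genuine gap at precisely the point the paper closes by citation; everything else you do (the identity placing $\sqrt N\pm X_i$ in $P$, the identification of membership in $P+J_r$ with membership of $g(r,Y)$ in the restricted preordering, the device converting bounded-degree ideal membership into the canonical generators $\pm(X_i-r_i)$, and $\QM(Y-p,q-Y)=\PO(Y-p,q-Y)$ on compact interval fibres) is correct and matches the paper's intent.

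A second, smaller issue concerns the empty fibres. For the preordering statement the Positivstellensatz does give $-1\in\PO(f_1(r,Y),\ldots,f_s(r,Y))$, hence saturation, but uniform degree bounds in $r$ are again needed and are not addressed by your sketch. For the quadratic-module statement your remark ``at most one natural generator, so $\QM=\PO$'' does not cover $S_r=\emptyset$: the natural-generator hypothesis is only assumed for nonempty fibres, and for quadratic modules the Positivstellensatz does not produce $-1\in\QM(f_1(r,Y),\ldots,f_s(r,Y))$. This case is handled by \cite{kms}, Theorem~4.5 (which the paper invokes for empty fibres in the companion corollary), and should be cited or proved rather than dismissed.
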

\begin{proof} The assumptions imply that all the preorderings
$$P+(X_1-r_1,\ldots,X_n-r_n)$$ (or the corresponding quadratic modules, respectively) are saturated and stable with
the same stability map for all $r$. See \cite{kms}, Section 4. An
easy calculation, as for example in \cite{km}, Note 2.3 (4), shows
$$\sqrt{N}-X_i,X_i+\sqrt{N}\in P$$ for all $i$. So we can apply
Theorem \ref{ddapp}.
\end{proof}

We can also use Theorem \ref{ddapp} in the case that the natural
generators are not among the $f_i(r,Y)$. This can be seen as a
generalization of Corollary 5.4 from \cite{kms}:

\begin{corollary} Let $M=\QM(f_1,\ldots,f_s)$ be a finitely
generated quadratic module in $\R[X_1,\ldots,X_n,Y]$ and assume
$N-\sum_{i=1}^n X_i^2\in M$ for some $N > 0.$ Suppose for all
$r\in\R^n$ the set $S_{r}$ (defined as in Corollary
\ref{cylinder}) is either empty or unbounded. Then
$$M^{\dd}=\overline{M}$$ holds.
\end{corollary}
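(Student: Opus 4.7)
The strategy is to apply the first part of Theorem \ref{ddapp} to $M$, taking $t=n$ and $b_i = X_i$. First, the hypothesis $N - \sum_{i=1}^n X_i^2 \in M$ yields, via the elementary identity recalled in \cite{km}, Note 2.3 (4), the bounds $\sqrt{N} - X_i, X_i + \sqrt{N} \in M$ for every $i$. Thus, setting $\la_i = -\sqrt{N}$ and $\La_i = \sqrt{N}$, the boundedness hypothesis of Theorem \ref{ddapp} is met and the compact parameter box is $\La = [-\sqrt{N}, \sqrt{N}]^n$.

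For each $r \in \La$, evaluation $X \mapsto r$ identifies the fibre module $M + J_r$ with the finitely generated quadratic module $\QM(f_1(r,Y), \ldots, f_s(r,Y))$ in $\R[Y]$; its associated basic closed set in $\R$ is exactly $S_r$, which by assumption is either empty or unbounded. For such one-variable quadratic modules, the results invoked in the proof of Corollary \ref{cylinder} (see \cite{kms}, Section 4, together with the relevant statements from \cite{ps}) provide both closedness and stability. Moreover, the stability maps produced there in the empty or unbounded one-dimensional case depend only on the $Y$-degrees of the generators and on the topological type of the associated set. Since $\deg_Y f_i(r,Y) \leq \deg_Y f_i$ uniformly in $r$, and there are only finitely many possible topological types for the fibres $S_r$, a single stability map can be selected that works simultaneously for every $r \in \La$.

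With all hypotheses of the first assertion of Theorem \ref{ddapp} in place, namely finite generation of $M$, the required bounds on $X_1, \ldots, X_n$, and the fibre modules $M + J_r$ being closed and stable with a common stability map, the theorem applies and gives $M^{\dd} = \overline{M}$, which is the desired conclusion.

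The main obstacle is the uniformity of the stability map across $r$. Unlike Corollary \ref{cylinder}, where the presence of natural generators at the fibres forces saturation (and uniform stability is stated explicitly in \cite{kms}), here we only demand closedness plus stability. One must therefore inspect the constructions from \cite{kms}, Section 4 and verify that the witnessing finite-dimensional subspace depends only on the $Y$-degree data of the $f_i(r,Y)$ and on which of the finitely many topological shapes the set $S_r$ takes. Once this uniformity is established, the rest of the argument is a direct invocation of Theorem \ref{ddapp}.
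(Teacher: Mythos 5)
Your proposal is correct and follows essentially the same route as the paper: deduce $\sqrt{N}-X_i, X_i+\sqrt{N}\in M$ from $N-\sum_i X_i^2\in M$, observe that the assumptions force all fibre modules $M+(X_1-r_1,\ldots,X_n-r_n)$ to be closed and stable with a common stability map (via the one-variable results of \cite{kms}, Section 4 -- the paper additionally points to Theorem 4.5 there for the empty fibres), and then invoke Theorem \ref{ddapp}. The uniformity discussion you flag as the main obstacle is treated at the same level of detail (an appeal to \cite{kms}) in the paper itself.
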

\begin{proof}Again $\sqrt{N}-X_i,X_i+\sqrt{N}\in P$ for
all $i$. Furthermore, the assumptions imply that all the quadratic
modules
$$M+ (X_1-r_1,\ldots,X_n-r_n)$$ are closed and stable with the
same stability map for all $r$ (for the empty fibers use Theorem
4.5 from \cite{kms}). Now apply Theorem \ref{ddapp}.
\end{proof}

We want to get results for more complicated fibres. \cite{s1}
gives a criterion for quadratic modules on curves to be stable and
closed. However, we need some result to obtain the
\textit{uniform} stability asked for in Theorem \ref{ddapp}. So we
consider the following setup. Let $b\in\R[X,Y]$ be a polynomial of
degree $d>0$. We assume that the highest degree homogeneous part
of $b$ factors as
$$\prod_{i=1}^d \left(r_iX + s_iY\right),$$ where all the
$(r_i:s_i)$ are pairwise disjoint points of $\mathbb{P}^1(\R).$ In
particular, $b$ is square free. Let $C$ denote the affine curve in
$\A^2$ defined by $b$ and $\widetilde{C}$ its projective closure
in $\mathbb{P}^2$. So $\widetilde{C}$ is defined by $\tilde{b}$,
the homogenization of $b$ with respect to the new variable $Z$.
The assumption on the highest degree part of $b$ implies that all
the points at infinity of $b$, namely
$$P_1=(-s_1:r_1:0),\ldots, P_d=(-s_d:r_d:0)\in \mathbb{P}^2,$$ are
real regular points (of the projective curve $\widetilde{C}$). So
the local rings of $\widetilde{C}$ at all these points are
discrete valuation rings  (a well known fact, see for example
\cite{fu},  Chapter 3). Indeed, the projective curve
$\widetilde{C}$ is the so called ''good completion" (see for
example \cite{p2}) of the affine curve $C$. We denote the
valuation corresponding to the local ring at $P_i$ by $\ord_i$.
For a polynomial $h\in\R[X,Y],$ we write $\ord_{P_i}(h)$ and mean
the value with respect to the valuation $\ord_{P_i}$ of
$h(\frac{X}{Z},\frac{Y}{Z})$ as a rational function on
$\tilde{C}$.

We start with the following result:

\begin{proposition}\label{curve}
Let $b, C$ and $\widetilde{C}$ be as above. Suppose
$$\ord_{P_i}(h)\geq -l$$ for some $h\in\R[X,Y], l\in\N$ and all
$i$. Then there is some $h'\in\R[X,Y]$ with $\deg(h')\leq l$ and
$h\equiv h'\mod (b).$
\end{proposition}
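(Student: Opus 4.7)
The plan is to prove the statement by induction on $\deg h$, exploiting the fact that $Z$ cuts out a uniformizer at each point at infinity of $\widetilde{C}$. The base case $\deg h \leq l$ is trivial: take $h' := h$.

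For the inductive step, the first geometric fact I would establish is $\ord_{P_i}(Z) = 1$ for every $i$. The hypothesis that the $(r_i : s_i)$ are pairwise distinct means that the leading form $\prod_{i=1}^d (r_iX + s_iY)$ of $b$ splits into $d$ distinct simple linear factors, so the intersection $\widetilde{C} \cap \{Z = 0\}$ consists exactly of the $d$ smooth points $P_1,\ldots,P_d$, and the simplicity of each factor forces $\widetilde{C}$ to meet $\{Z=0\}$ transversally at each $P_i$. B\'ezout's theorem then gives $\sum_i I_{P_i}(\widetilde{C}, Z) = d$, hence each intersection multiplicity is exactly $1$.

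Next I would translate the valuation hypothesis on $h$ into a divisibility statement on its leading form. Let $e := \deg h$ and let $h_e$ denote the degree-$e$ homogeneous part of $h$; writing $h^{\mathrm{hom}}$ for the degree-$e$ homogenization, we have $h(X/Z, Y/Z) = h^{\mathrm{hom}}/Z^{e}$ and thus $\ord_{P_i}(h) = \ord_{P_i}(h^{\mathrm{hom}}) - e$. If $e > l$, the hypothesis $\ord_{P_i}(h) \geq -l$ forces $\ord_{P_i}(h^{\mathrm{hom}}) \geq 1$, i.e.\ $h_e(-s_i, r_i) = 0$ for each $i$. Since the linear forms $r_iX + s_iY$ are pairwise coprime and each divides $h_e$, their product $b_d := \prod_i(r_iX + s_iY)$ divides $h_e$. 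In particular $e \geq d$, and we may write $h_e = b_d \cdot q_{e-d}$ for some homogeneous polynomial $q_{e-d}$ of degree $e-d$.

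The inductive step is then immediate: $h - q_{e-d} \cdot b$ has degree strictly less than $e$ (its degree-$e$ part $h_e - q_{e-d}\cdot b_d$ vanishes), and since it differs from $h$ by a multiple of $b$, its values under each valuation $\ord_{P_i}$ agree with those of $h$, so the induction hypothesis applies and produces the required $h'$. I expect the main, and really the only non-routine, step to be the transversality--B\'ezout computation establishing $\ord_{P_i}(Z) = 1$; once this is in hand, the rest of the argument is a formal induction on degree driven by the observation that the leading form of a reducible polynomial in $(b)$ is automatically divisible by $b_d$.
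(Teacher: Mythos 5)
Your proof is correct and follows essentially the same route as the paper: relate $\ord_{P_i}(h)$ to the order of the homogenization and of $Z$ at $P_i$, deduce for $\deg h>l$ that the leading form of $h$ is killed by each distinct linear factor $r_iX+s_iY$ and hence divisible by the leading form of $b$, then reduce modulo $b$ and descend. The only differences are cosmetic: the paper phrases the key identity via intersection numbers $I(P_i;\tilde b\cap\cdot)$ and only needs $I(P_i;\tilde b\cap Z)\geq 1$ (so your transversality/B\'ezout computation of $\ord_{P_i}(Z)=1$ is more than required), and you make explicit the points the paper leaves implicit, namely the induction and the fact that subtracting a multiple of $b$ does not change the orders $\ord_{P_i}$.
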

\begin{proof} Let $m$ be the degree of $h$ and
$\tilde{b}=Z^d b(\frac{X}{Z},\frac{Y}{Z})$ as well as
$\tilde{h}=Z^m h(\frac{X}{Z},\frac{Y}{Z})$ be the homogenization
of $b$ and $h$, respectively. Assume without loss of generality
$$P_1=(1:y:0)$$ for some $y\in \R.$

For any homogeneous polynomial $g$ in the variables $X,Y,Z$ we
have
$$0\leq \ord_{P_1}\left(\frac{g}{X^{\deg(g)}}\right)=I(P_1;\tilde{b}\cap g),$$
where $I$ denotes the intersection number. This is \cite{fu},
Chapter 3.3.

As
$$\ord_{P_1}(h)=\ord_{P_1}\left(\frac{\tilde{h}}{X^m}\right)-m\cdot\ord_{P_1}\left(\frac{Z}{X}\right),$$
we have

\begin{align*} -l&\leq \ord_{P_1}(h)\\ &=
I(P_1;\tilde{b}\cap\tilde{h}) -m \cdot I(P_1;\tilde{b}\cap Z)\\
&\leq I(P_1;\tilde{b}\cap\tilde{h}) -m .
 \end{align*}

 Now whenever $m\geq l+1$, then $$1\leq
I(P_1;\tilde{b}\cap\tilde{h}),$$  so  $\tilde{h}$ must vanish at
$P_1$.

 The same argument applies to all points at infinity of
$b$. So if $m\geq l+1$, then the highest degree part of $b$
divides the highest degree part of $h$ in $\R[X,Y]$. Thus $h$ can
be reduced modulo $b$ to a polynomial $h'$ of strictly smaller
degree.
\end{proof}
In the following proposition, the pure closedness and stability
result follows from \cite{s1}, Proposition 6.5.

\begin{proposition}\label{simstab} Let $M=\QM(f_1,\ldots,f_s)\subseteq \R[X,Y]$ be a finitely
generated quadratic module. Let $b\in\R[X,Y]$ be a polynomial
whose highest degree part factors as above. For some $r\in\R$
assume that all the points at infinity of the curve $C_{r}$
defined by $b=r$ lie in the closure of $\Se(M)\cap C_{r}(\R)$.
Then the finitely generated quadratic module
$$M+ (b-r)=\QM(f_1,\ldots,f_s,b-r,r-b)$$ is closed and stable, with a stability map that
depends only on $b$ and $f_1,\ldots,f_s$, but not on $r$.
\end{proposition}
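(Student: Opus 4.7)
The plan is to deduce closedness and stability from \cite{s1}, Proposition 6.5, and then extract the $r$-independence of the stability map from the details of the reduction procedure used there. First I would observe that $b-r$ has the same highest degree homogeneous part as $b$, so the projective closure $\widetilde{C_r}$ of $C_r$ has the same points at infinity $P_1,\ldots,P_d \in \mathbb{P}^2$ as $\widetilde{C}$, all of them real regular points of $\widetilde{C_r}$. Combined with the hypothesis that every $P_i$ lies in the closure of $\Se(M)\cap C_r(\R)$, this places us in the setting of Scheiderer's Proposition 6.5 and yields that $M+(b-r)$ is closed and stable; what remains is the $r$-independence of a stability map.

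The main auxiliary tool is an $r$-uniform version of Proposition \ref{curve}: if $h \in \R[X,Y]$ satisfies $\ord_{P_i}(h) \geq -l$ in the function field of $\widetilde{C_r}$ for every $i$, then $h \equiv h' \pmod{(b-r)}$ for some $h' \in \R[X,Y]$ with $\deg(h') \leq l$. The proof of Proposition \ref{curve} uses only the points $P_i$ and the intersection-number formula at them, both of which depend only on the highest degree part of $b-r$ (which equals that of $b$). Hence the reduction statement and the bound $l$ depend on $b$ but not on $r$.

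For the uniform stability map, given a finite-dimensional $U \subseteq \R[X,Y]_{\leq D}$ and any $p \in (M+(b-r))\cap U$, Scheiderer's proof of Proposition 6.5 produces a representation
$$p = \si_0 + \si_1 f_1 + \cdots + \si_s f_s + \si_{s+1}(b-r) + \si_{s+2}(r-b)$$
in which the pole orders of each $\si_i f_i$ at every $P_j$, viewed as a function on $\widetilde{C_r}$, are bounded in terms of the pole order of $p$, and hence by $D$ and the degrees of the $f_i$. Since the $P_j$ are $r$-independent, so is this bound. Applying the adapted Proposition \ref{curve} to the individual squares appearing in each $\si_i$ (after passing to $\widetilde{C_r}$) then produces a polynomial degree bound depending only on $D$, $b$, and $f_1,\ldots,f_s$, furnishing the desired $r$-independent finite-dimensional subspace $V=V(U)$.

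The main obstacle I expect is the step where individual squares must be reduced modulo $(b-r)$ while preserving the sum-of-squares structure, since the reduction of a square is not a priori a square. Scheiderer handles this via a local analysis at each $P_i$ using that $\hat{\mathcal{O}}_{P_i,\widetilde{C_r}}$ is a DVR with residue field $\R$. The point to verify is that the choices made in this local analysis, such as local parameters and local decompositions of sums of squares, can be made uniformly as $r$ varies. Since the $P_i$ are fixed and $\widetilde{C_r}$ is smooth at each $P_i$ for every $r$, this uniformity should be available, completing the construction of an $r$-independent stability map.
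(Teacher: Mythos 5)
Your overall route is the same as the paper's: note that $b-r$ has the same leading form as $b$, so the points at infinity $P_1,\ldots,P_d$ and their regularity are independent of $r$; invoke Scheiderer's Proposition 6.5 (\cite{s1}) for closedness and for representations with pole-order control at these points; and convert pole-order bounds back into degree bounds by Proposition \ref{curve}, which indeed applies verbatim with $b-r$ in place of $b$. Your observation that the constants entering Proposition \ref{curve} depend only on the leading form is exactly the paper's use of Fulton's property (7), which gives $I(P_i;\widetilde{b-r}\cap Z)=I(P_i;\tilde b\cap Z)$ and hence the uniform estimate $\ord_{P_i}(h)\geq -m\cdot N$ for every polynomial $h$ of degree $m$, with $N$ independent of $r$.

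Where your write-up stops short is precisely the step you flag as the ``main obstacle,'' and your proposed fix points in the wrong direction: no re-examination of Scheiderer's local analysis, and no uniform choice of local parameters across the family $\widetilde{C_r}$, is needed. What one extracts from his proof is a representation $h=\sum_i \si_i f_i + g\cdot(b-r)$ in which the sums of squares $\si_i$ are built from polynomials $p$ whose orders at all $P_j$ are bounded below uniformly; one then applies Proposition \ref{curve} not to $\si_i$ or to the squares $p^2$, but to each such $p$, getting $p\equiv p'\bmod (b-r)$ with $\deg p'$ bounded. Since $p^2-p'^2\in(b-r)$ and the full ideal $(b-r)$ lies in $\QM(f_1,\ldots,f_s,b-r,r-b)$, all discrepancies are absorbed into the ideal term, whose degree is then bounded as well; the sum-of-squares structure is never in danger. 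Note also that you state the pole-order bound only for the products $\si_i f_i$; to descend from that to the individual building blocks of $\si_i$ you would additionally need an upper bound on $\ord_{P_j}(f_i)$ along $\widetilde{C_r}$ that is uniform in $r$ (plus the no-cancellation property of squares at real regular points), a point you do not address and which the paper sidesteps by taking the bound on the square roots directly from Scheiderer's argument. With these two repairs your argument matches the paper's proof.
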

\begin{proof}  Without loss of generality, let
$P_1=(1:y:0)$ be a point   at infinity of $C_{r}$. Denote by
$\ord_{P_1}$ the valuation with respect to the local ring of
$\widetilde{C_{r}}$ at $P_1$. Let $h\in\R[X,Y]$ have degree $m$,
and let $\tilde{h}$ as well as $ \widetilde{b-r}$ be the
homogenizations, as in the previous proof. Then
\begin{align*}\ord_{P_1}(h) &=
\ord_{P_1}\left(\frac{\tilde{h}}{X^m}\right)-m\cdot\ord_{P_1}\left(\frac{Z}{X}\right)\\
&\geq -m\cdot I\left(P_1; \widetilde{b-r}\cap Z\right)\\&= -m\cdot
I\left(P_1;\tilde{b}\cap Z\right),\end{align*} where the last
equality uses property (7) in \cite{fu}, p. 75, for intersection
numbers. So there is some $N$, not depending on $r$, such that
$$\ord_{P}(h)\geq -m\cdot N$$ for all the  points of infinity of $C_{r}$.

 Now the proof of Proposition 6.5 from
\cite{s2} shows that whenever $h\in \overline{M+(b-r)}$, then we
can find a representation
\begin{align}\label{repr}h=\sum_{i=0}^s \si_if_i +
g\cdot(b-r)\end{align} with sums of squares $\si_i$ built of
polynomials that have order greater than $-m\cdot N$ in all points
at infinity of $C_{r}$. Applying Proposition \ref{curve} we can
reduce these elements modulo $b-r$ and obtain a representation as
in (\ref{repr}) with sums of squares of elements of degree less or
equal to $m\cdot N$. So of course also the degree of $g$ is
bounded suitably,  independent of $r$. This shows that the
stability map does not depend on $r$.
\end{proof}

So the following Theorem is an immediate consequence of Theorem
\ref{ddapp} and Proposition \ref{simstab}.

\begin{theorem}\label{infcurve} Let $M\subseteq\R[X,Y]$ be a finitely
generated quadratic module. Let $b\in\R[X,Y]$ with $\La-b,b-\la\in
M$ for some $\la\leq \La$, and assume the highest degree part of
$b$ factors as above.  Suppose that for all $r\in[\la,\La]$ all
the
 points at infinity of the curve $C_{r}$ defined by
$b=r$ lie in the closure of $\Se(M)\cap C_{r}(\R)$. Then
$$M^{\dd}=\overline{M}$$ holds. If all the fibre modules $M+(b-r)$
have (SMP) in addition, then $M$ has the $\dd$-property.
\end{theorem}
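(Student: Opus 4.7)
The plan is to simply verify that the hypotheses of Theorem \ref{ddapp} are met when specialized to a single bounded element $b_1 = b$ with $\la_1 = \la$, $\La_1 = \La$, and then invoke that theorem directly. So the index set is $\Lambda = [\la,\La]$, and the fibre modules in question are exactly the $M + (b-r)$ for $r \in [\la,\La]$.

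First I would apply Proposition \ref{simstab} for each $r \in [\la,\La]$. The geometric hypothesis on the points at infinity of $C_r$ in Theorem \ref{infcurve} is precisely what Proposition \ref{simstab} requires, so for every $r$ the quadratic module $M + (b-r)$ is closed and stable with a stability map depending only on $b$ and the generators of $M$. Crucially, this stability map does \emph{not} depend on $r$, which is exactly the uniform stability condition asked for in Theorem \ref{ddapp}.

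With uniform stability and closedness of every fibre module in hand, the first conclusion $M^{\dd} = \overline{M}$ follows immediately from the closed-and-stable case of Theorem \ref{ddapp}. For the second assertion, I would observe that if each $M + (b-r)$ additionally has (SMP), then combined with closedness we get
\[
M + (b-r) \;=\; \overline{M+(b-r)} \;=\; (M+(b-r))^{\vv} \;=\; \Pos(\Se(M+(b-r))),
\]
so each fibre module is saturated. The saturated-and-stable case of Theorem \ref{ddapp} then yields that $M$ has the $\dd$-property.

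There is no real obstacle here beyond matching the hypotheses: the work is bundled into Proposition \ref{simstab} (where the intersection-theoretic argument on the good completion $\widetilde{C_r}$ provides the crucial uniformity in $r$) and into Theorem \ref{ddapp} (where the abstract fibre theorem is set up so that simultaneous stability of the fibre modules is exactly the input needed). The only thing to check carefully is that the stability map produced by Proposition \ref{simstab} is indeed the one with respect to the canonical generators $f_1,\ldots,f_s, \pm(b-r)$ of $M+(b-r)$, which is built into the statement of that proposition.
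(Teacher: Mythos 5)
Your proposal is correct and is essentially the paper's own argument: the paper derives Theorem \ref{infcurve} as an immediate consequence of Proposition \ref{simstab} (uniform stability and closedness of the fibre modules $M+(b-r)$) combined with Theorem \ref{ddapp}, exactly as you do. Your observation that closedness plus (SMP) yields saturation of each fibre module, so that the saturated-and-stable case of Theorem \ref{ddapp} gives the $\dd$-property, is precisely the intended reasoning.
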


We give some explicit examples for these last results.

\begin{example}\label{einsdurch}
We look at the semi-algebraic set in $\R^2$ defined by the
inequalities $$ 0\leq x, 0\leq y \mbox{ and } xy\leq 1:$$
\begin{center}\bf\includegraphics[scale=0.3]{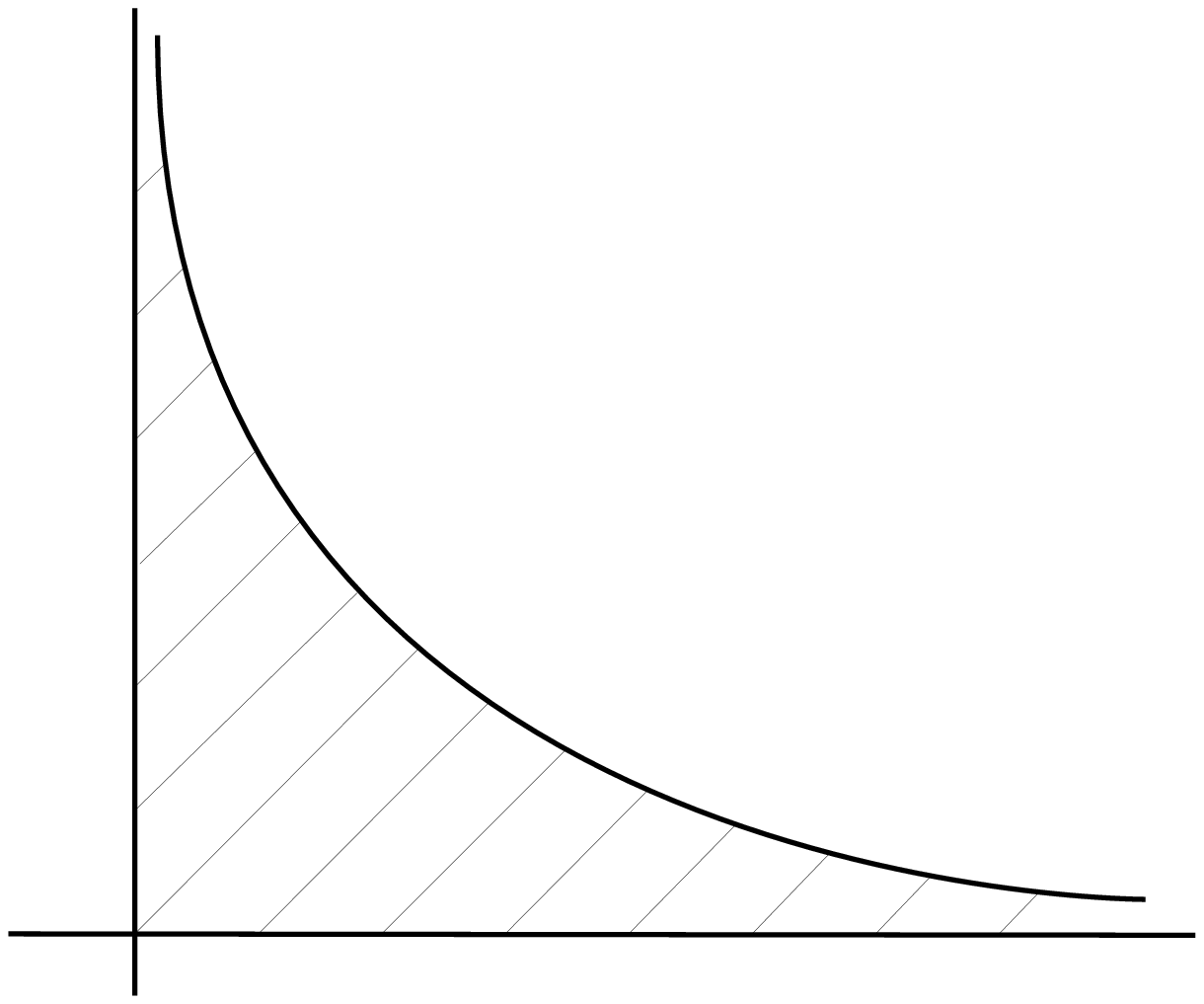}\end{center}
A lot of interesting phenomena can be observed for this set. There
are different quadratic modules describing it, we consider the
following ones:

\begin{align*} M_1&:=\QM\left(X,Y,1-XY\right) \\ M_2&:=\QM\left(X,Y,XY,1-XY\right)
\\M_3&:=\QM\left(X,Y^3,XY,1-XY\right)\\ P&:=\PO\left(X,Y, 1-XY\right)
 \end{align*}

The quadratic module $M_1$ is stable; one checks that no
cancellation of highest degree terms can occur in a sum $$\si_0 +
\si_1X +\si_2Y +\si_3(1-XY).$$ So by \cite{s}, Theorem 5.4, $M_1$
does not have (SMP).

To the quadratic module $M_2$ we can apply Theorem \ref{infcurve}
with the  polynomial $b=XY$:   we have $b,1-b\in M_2$. For
$r\in[0,1]$, the finitely generated quadratic module
$$\QM(X,Y,XY,1-XY) + (XY-r)=\QM(X,Y) + (XY-r)$$  is  saturated.
 This is an
easy calculation for $r>0$; for $r=0$ it is Example 3.26 from
\cite{p2}. So $M_2$ has the $\dd$-property, and in particular
(SMP).

Note that the fibre modules of $M_1$ and $M_2$ are the same:
$$M_1+(XY-r)= M_2+(XY-r)$$ for all $r\in[0,1].$ As $M_1$
does not have the $\dd$-property, this shows that the condition
$\La-b,b-\la\in M$ in Theorem \ref{infcurve}, as well as the
corresponding conditions in Theorems \ref{ddapp} and \ref{main}
can not be omitted.

Now consider $M_3$. The quadratic module $\QM(Y^3)\subseteq\R[Y],$
obtained by factoring out the ideal $(X)$, does not have (SMP)
(see for example \cite{km}). So in view of Proposition 4.8 from
\cite{s},
 $M_3$ does also not have (SMP). On the
other hand, we can still apply Theorem \ref{infcurve} with $b=XY$,
and obtain
$$M_3^{\dd}=\overline{M_3}.$$

Last, the preordering $P$ obviously contains $M_2$ and therefore
also has the $\dd$-property. This solves the question posed in
\cite{kms}, Example 8.4.
\end{example}

\begin{example} We consider the semi-algebraic set defined by the
inequalities $$0\leq x(x+y)(x-y) -xy \leq 1:$$
\begin{center}\bf\includegraphics[scale=0.2]{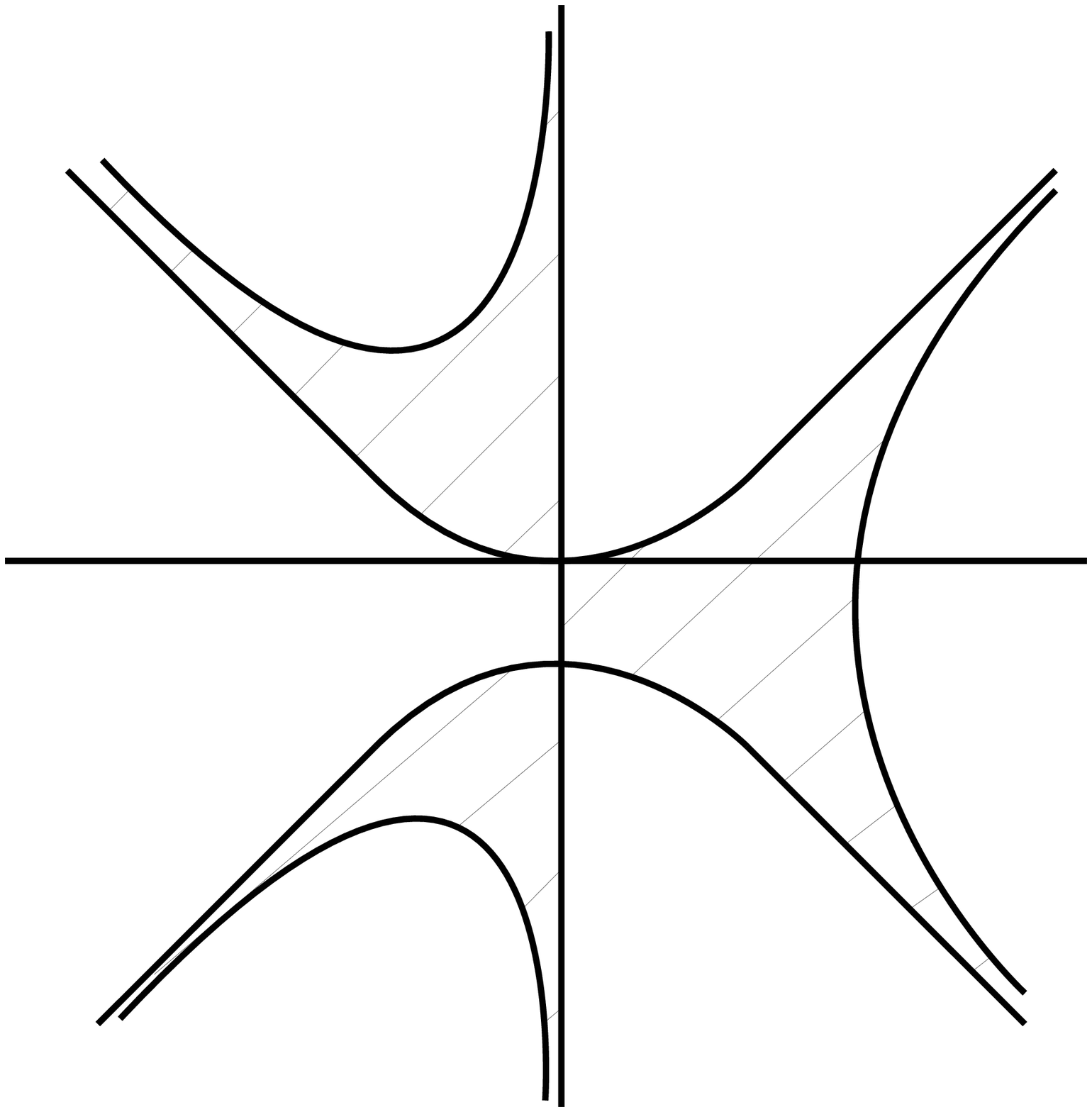}\end{center}
We can apply Theorem \ref{infcurve} to the quadratic module
$$M=\QM(b,1-b),$$ where $b=X(X+Y)(X-Y)-XY$. We use $b$ as the
bounded polynomial and obtain $$M^{\dd}=\overline{M}.$$ However,
$M$ does not have (SMP). Indeed, the quadratic module
$$M+ (b)$$ does not have (SMP). This follows from \cite{p2},
Theorem 3.17 together with \cite{s1}, Proposition 6.5. So in view
of Proposition 4.8 from \cite{s}, $M$ does not have (SMP).
\end{example}

\section{Application to Polynomial Optimization}
We want to explain how the result of Theorem  $\ref{ddapp}$,
together with the observation from Remark \ref{rem} (3), can be
used for constrained polynomial optimization. We take a similar
approach to the one in \cite{l}, see also \cite{m} Chapter 10 and
\cite{sw2} for a nice account of the topic. However, our approach
does not only  apply to compact semi-algebraic sets.

Assume $A=\R[\underline{X}]$, so $\V_A=\R^n$. Consider a finitely
generated quadratic module
$$M=\QM(f_1,\ldots,f_s)\subseteq\R[\underline{X}]$$ with associated semi-algebraic set
$\Se=\Se(M)$, which has the property
$$f\geq 0 \mbox{ on } \Se \Rightarrow \exists q \mbox{ s.t. } f+\ep q\in M \mbox{ for all }
\ep>0,$$ where $q$ can be chosen to only depend on the degree of
$f$ (in other words: for any nonnegative polynomial of the same
degree, the same $q$ works). Without loss of generality $q$ can be
chosen to be from $M$ (see \cite{km}, the note following
Proposition 1.3). Note that for example the quadratic modules from
Corollary \ref{cylinder} and Theorem \ref{infcurve} have this
stronger property. Note also that if a compact semi-algebraic set
is described by a preordering, or more general, by an archimedean
quadratic module $M$, the above condition holds, as every strictly
positive polynomial belongs to $M$. So $q=1$ works for every
nonnegative polynomial.

Given some $f\in\R[\underline{X}]$, one wants to calculate the
infimum of $f$ on $\Se$, denoted by $f_*$. This is usually a hard
problem, but for compact semi-algebraic sets $S$, Lasserre
\cite{l} provided a sequence of semi-definite programs (which are
much easier to solve), whose optimal values converge to $f_*$.

Now take the polynomial $q$ from above corresponding to the degree
of $f$. It can be obtained explicitly in the case of Theorem
\ref{main} from the proof. For example, in Corollary
\ref{cylinder}, $q$ can be chosen to be the sum of the elements
$$\left(\frac{Y^j-1}{2}\right)^2 \mbox{ and }
\left(\frac{Y^j+1}{2}\right)^2,$$ where $j$ runs from $0$ to the
$Y$-degree of $f$.

For fixed $\ep>0$ and $d\in\N$, we consider
$$F_{\ep,d}:=\sup\left\{r\mid f-r +\ep q \in M_{d}\right\}$$ and
$$F_{\ep}:=\sup\left\{r\mid f-r +\ep q \in M\right\}.$$ Here,
$M_d$ denotes the set of all elements from $M$ that can be
represented by $f_1,\ldots,f_s$ and sums of squares of degree
$\leq d$.

 We
obviously have $F_{\ep,d}\leq F_{\ep}$ for all $d$. Furthermore,
each $F_{\ep,d}$ can be obtained by solving a semi-definite
program, which can be done efficiently; see \cite{l,m,sw2}.

\begin{proposition}
The sequence $(F_{\ep,d})_{d\in\N}$ converges monotonically
increasing to $F_{\ep}$.
\end{proposition}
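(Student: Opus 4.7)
The plan is short since the statement essentially reduces to the definition of supremum combined with the fact that $M=\bigcup_{d\in\N}M_d$.

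First, I would establish monotonicity and the upper bound. Any representation of an element using sums of squares of degree $\leq d$ is trivially a representation using sums of squares of degree $\leq d+1$, so $M_d\subseteq M_{d+1}\subseteq M$. Hence the sets $\{r\in\R\mid f-r+\ep q\in M_d\}$ increase in $d$ and are contained in $\{r\in\R\mid f-r+\ep q\in M\}$, which gives $F_{\ep,d}\leq F_{\ep,d+1}\leq F_{\ep}$ for every $d$. In particular, $(F_{\ep,d})_d$ is monotone and the limit $L:=\lim_d F_{\ep,d}\in[-\infty,\infty]$ exists and satisfies $L\leq F_{\ep}$.

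Next, I would prove the reverse inequality $L\geq F_{\ep}$. If $F_{\ep}=-\infty$ the claim is vacuous, so assume $F_{\ep}>-\infty$. For any real $r<F_{\ep}$, the definition of supremum produces an $r'\in(r,F_{\ep}]$ with $f-r'+\ep q\in M$ (and the same argument works if $F_{\ep}=\infty$, by choosing $r$ arbitrarily large). Since $M$ is the directed union of the $M_d$, we have $f-r'+\ep q\in M_{d_0}$ for some $d_0$, so $F_{\ep,d_0}\geq r'>r$. By monotonicity $L\geq r$, and letting $r\nearrow F_{\ep}$ yields $L\geq F_{\ep}$. Combining the two inequalities gives $L=F_{\ep}$.

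There is no conceptual obstacle here. The only point worth emphasizing is that the polynomial $q$ fixed earlier in the section is independent of both $r$ and $d$, so the membership question ``$f-r+\ep q\in M_d$'' is genuinely just a degree-truncated version of ``$f-r+\ep q\in M$''. It is this independence that makes the directed-union step in the second paragraph legitimate; without it, one could not pass from an arbitrary representation in $M$ to one in some fixed $M_{d_0}$.
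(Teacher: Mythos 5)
Your argument is correct and is essentially the paper's own proof: monotonicity from $M_d\subseteq M_{d+1}\subseteq M$, and for the limit, take any feasible $r$ (i.e.\ $f-r+\ep q\in M$), observe it lies in some $M_{d_0}$, so $F_{\ep,d_0}\geq r$. Your handling of the supremum via $r'\in(r,F_\ep]$ is just a slightly more explicit write-up of the same step.
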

\begin{proof}
It is clear that the sequence is monotonically increasing. Now
take some $r$ which is feasible for $F_{\ep}$, which means that
$f-r+\ep p$ belongs to $M$ (if no such $r$ exists, then the
statement is also clear, as all values are $-\infty$). But then
$f-r+\ep p$ belongs to $M_{d}$ for some big enough $d$. So
$F_{\ep,d}\geq r$, which proves the statement.
\end{proof}

Now suppose $f\geq r$ on $\Se$ for some $r\in\R$. Then $f-r$ is
nonnegative on $\Se$ and so $$f-r+\ep q\in M$$ holds for all
$\ep>0$.  This shows $F_{\ep}\geq f_*$ for all $\ep>0$. We have
used here that subtracting $r$ from $f$ does not change the
degree, and therefore we can use the polynomial $q$, no matter how
big or small $r$ is. This could fail if $M$ only has the
$\dd$-property, not the stronger version we assume.

\begin{proposition}
For $\ep \searrow 0$, the sequence $(F_{\ep})_{\ep}$ converges
monotonically decreasing to $f_*$.
\end{proposition}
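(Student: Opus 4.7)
The plan is to first establish that $\epsilon\mapsto F_\ep$ is monotonically non-decreasing, so that the decreasing limit as $\ep\searrow 0$ exists, and then to squeeze that limit from above by $f_*$. The lower bound $F_\ep\geq f_*$ for every $\ep>0$ has already been argued in the paragraph preceding the statement, so convergence will follow once I produce the upper bound. The monotonicity uses nothing beyond the standing assumption $q\in M$: if $0<\ep_1<\ep_2$ and $r$ is feasible for $F_{\ep_1}$, i.e.\ $f-r+\ep_1 q\in M$, then $(\ep_2-\ep_1)q\in M$ together with closure under addition gives $f-r+\ep_2 q=(f-r+\ep_1 q)+(\ep_2-\ep_1)q\in M$, hence $r$ is feasible for $F_{\ep_2}$ and $F_{\ep_1}\leq F_{\ep_2}$.

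Let $L:=\lim_{\ep\searrow 0}F_\ep\in[f_*,+\infty]$. To finish, I would show $L\leq f_*$ by contradiction: pick $r\in\R$ with $f_*<r<L$. Then $F_\ep\geq L>r$ for every $\ep>0$, so by the definition of the supremum there exists $r_\ep>r$ with $f-r_\ep+\ep q\in M$. Since $r_\ep-r>0$ and $1\in M$, I conclude
$$f-r+\ep q=(f-r_\ep+\ep q)+(r_\ep-r)\cdot 1\in M\qquad\text{for every }\ep>0.$$
Evaluating this at an arbitrary point $\al\in\Se=\Se(M)$ gives $f(\al)-r+\ep q(\al)\geq 0$ for all $\ep>0$, and letting $\ep\to 0^+$ yields $f(\al)\geq r$ on $\Se$, contradicting $r>f_*=\inf_{\Se}f$. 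Note that the degree of $f-r$ coincides with the degree of $f$, so the polynomial $q$ fixed at the start works uniformly in $r$; the text already highlighted this as the reason the stronger ``degree-dependent'' property is needed, not merely the $\dd$-property.

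The argument is short and the only conceptual step is the last one, where I pass from the algebraic statement ``$f-r+\ep q\in M$ for all $\ep>0$'' to the geometric statement ``$f\geq r$ on $\Se$.'' That passage is immediate because membership in $M$ forces nonnegativity on $\Se(M)$, and for each fixed $\al$ the quantity $\ep q(\al)$ tends to zero even though $q$ may be unbounded on $\Se$. I do not expect any real obstacle; the work has already been done in setting up $q$ so as to depend only on $\deg f$ and to lie in $M$. (The degenerate cases $\Se=\emptyset$ or $f_*=-\infty$ are covered by the same reasoning, interpreting the inequalities in $[-\infty,+\infty]$.)
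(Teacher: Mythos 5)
Your proof is correct and follows essentially the same route as the paper: monotonicity comes from $q\in M$ (adding $(\ep_2-\ep_1)q\in M$), and the limit is forced down to $f_*$ by observing that $f-r+\ep q\in M$ for all $\ep>0$ implies $f\geq r$ pointwise on $\Se$, contradicting $r>f_*$. The paper's version is just terser (it phrases the contradiction with $r=f_*+\de/2$ and treats $f_*=-\infty$ separately), while you spell out the downward-closedness of feasible $r$ and the pointwise evaluation explicitly; no substantive difference.
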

\begin{proof} From the fact that $q$ is in $M$ it is clear
that the sequence is monotonically decreasing. Now suppose $f_*$
is finite and  $F_{\ep}\geq f_* +\de$ for some $\de>0$ and all
$\ep>0$. This means that $f_*+\frac{\de}{2}$ is feasible for all
$F_{\ep}$ and so $$f -f_*-\frac{\de}{2}+\ep q \in M $$ holds for
all $\ep>0$. This clearly implies $f-f_*-\frac{\de}{2}\geq 0$ on
$\Se$ and so $f\geq f_* +\frac{\de}{2}$ on $\Se$, a contradiction.

If $f_*=-\infty$, the same argument applies by assuming
$F_{\ep}\geq N$ for some $N\in\R$ and all $\ep$.

\end{proof}

So combining these results, we get the following:

\begin{theorem}
For every $f\in\R[\underline{X}]$ there is a sequence
$(m_i)_{i\in\N}$ of natural numbers, such that the sequence
$\left(F_{\frac{1}{i},m_{i}}\right)_i$ converges to $f_*$.
\end{theorem}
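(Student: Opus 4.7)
The plan is a standard diagonal extraction argument combining the two preceding propositions. Fix $f\in\R[\underline{X}]$. By the second proposition, $F_{\ep}\to f_*$ as $\ep\searrow 0$, so in particular the sequence $(F_{1/i})_{i\in\N}$ converges to $f_*$. By the first proposition, for each fixed $i$ the sequence $(F_{1/i,d})_{d\in\N}$ converges monotonically from below to $F_{1/i}$. The idea is to choose $m_i$ depending on $i$ so that $F_{1/i,m_i}$ closely approximates $F_{1/i}$, and then combine the two convergences by a sandwich.

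More precisely, I would split into two cases. First suppose $f_*>-\infty$. Since $F_{\ep}\geq f_*$ was established in the paragraph preceding the second proposition, each $F_{1/i}$ is then a real number. Using the monotone convergence $F_{1/i,d}\nearrow F_{1/i}$, I pick $m_i\in\N$ with
$$F_{1/i}-\tfrac{1}{i}\leq F_{1/i,m_i}\leq F_{1/i}.$$
Letting $i\to\infty$ and using $F_{1/i}\to f_*$, the squeeze theorem yields $F_{1/i,m_i}\to f_*$. In the case $f_*=-\infty$, no careful choice of $m_i$ is required at all: the unconditional inequality $F_{1/i,m_i}\leq F_{1/i}$ combined with $F_{1/i}\to-\infty$ already forces $F_{1/i,m_i}\to-\infty=f_*$ for any choice of $m_i$ whatsoever.

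There is no substantive obstacle here; the two propositions have already done all the real work, and the theorem is the observation that the iterated limit $\lim_{\ep\searrow 0}\lim_{d\to\infty}F_{\ep,d}=f_*$ can be realized as a single diagonal limit $\lim_{i\to\infty}F_{1/i,m_i}$. The only mild point of care is the possibility that some $F_{1/i}$ equals $-\infty$, but this is absorbed into the second case above.
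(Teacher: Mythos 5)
Your proposal is correct and matches the paper's (implicit) argument: the paper states this theorem simply as "combining these results," i.e.\ exactly the diagonal extraction from the two propositions that you carry out, and your explicit handling of the $f_*=-\infty$ and $F_{1/i}=-\infty$ cases is a sound way to make that combination precise.
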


\end{document}